\renewcommand{\epsilon}{\varepsilon}
\renewcommand{\phi}{\varphi}
\newcommand{\su}{\subseteq}
\newcommand{\rest}{\restriction}
\renewcommand{\a}{\alpha}
\renewcommand{\b}{\beta}
\newcommand{\g}{\gamma}
\newcommand{\n}{\nu}
\renewcommand{\d}{\delta}
\renewcommand{\l}{\lambda}
\renewcommand{\k}{\kappa}
\newcommand{\om}{\omega}
\newcommand{\lng}{\langle}
\newcommand{\rng}{\rangle}
\newcommand{\ov}{\overline}
\newcommand{\sm}{\setminus}
\newcommand{\Licf}{{\operatorname {\chi_{\ell CF} }}}
\newcommand{\acc}{{\operatorname {acc}}}
\newcommand{\cf}{{\operatorname {cf}}}
\newcommand{\imply}{\Rightarrow}
\newtheorem{theorem}{Theorem}[section]
\newtheorem{corollary}[theorem]{Corollary}
\newtheorem{lemma}[theorem]{Lemma}
\newtheorem{definition}[theorem]{Definition}
\newtheorem{claim}[theorem]{Claim}
\author{Menachem Kojman}
\thanks{The author was supported by a fellowship from the
  Institute for Advanced Study, Princeton, NJ, while working on this
  research.}
\address{Department of Mathematics\\
Ben-Gurion University of the Negev\\
P.O.B. 653 \\62qyor
Be'er Sheva\\
84105 Israel}
\email{kojman@math.bgu.ac.il}
\subjclass[2000]{Primary: 03E04; 03E05; 03E75; 05C15; 05C63}
\keywords{Shelah's revised GCH, ZFC, family of sets, Property
  B, Miller's theorem, disjoint refinement, essentially disjoint
  family, conflict-free number, filtrations,
  Skolem-L\"owenhem theorem}
\begin{document}
\title{Splitting families of sets in ZFC}
\maketitle

\begin{abstract} 
  Miller's 1937 splitting theorem was proved for pairs of cardinals
  $(\n,\rho)$ in which $n$ is finite and $\rho$ is infinite.  An
  extension of Miller's theorem is proved here in ZFC for pairs of
  cardinals $(\nu,\rho)$ in which $\nu$ is arbitrary and $\rho\ge
  \beth_\om(\nu)$.  The proof uses a new general method that is based
  on Shelah's revises Generalized Continuum Hypothesis theorem.

  Upper bounds on conflict-free coloring numbers of families of sets
  and a general comparison theorem follow as corollaries of the main
  theorem. Other corollaries eliminate the use of additional axioms
  from splitting theorems due to Erd\H os, Hajnal, Komj\'ath, Juh\'
  asz and Shelah.
\end{abstract}

\section{Introduction}
Saharon Shelah's work on pcf theory\cite{CA} transformed cardinal
arithmetic by revealing a robust structure of small products of
regular cardinals, from which absolute upper bounds followed for
singular cardinal exponentiation, in contrast to regular cardinals
exponentiation, for which no upper bounds exist by Cohen's and
Easton's results.  Shelah described the state of
affairs in 1992 in an expository article \cite{ca-skeptics}:
\begin{quote}  ``\dots the theory of cardinal arithmetic involves
two quite different aspects, one of which is totally independent of
the usual axioms of set theory, while the other is quite amenable to
investigation on the basis of ZFC.''
\end{quote}

In 2000, Shelah made an important breakthrough in cardinal arithmetic
beyond pcf theory and beyond exponentiation \cite{sh:460} by
introducing a new arithmetic function, the \emph{revised power}
$\l^{[\k]}$ (definition \ref{rp} below), and proving the equality
$\l^{[\k]}=\l$ for all $\l\ge\mu$, regular or singular, with all but a
bounded set of regular $\k<\mu$, for an arbitrary strong limit
cardinal $\mu$. This was the first time since the equality $\l^n=\l$
was proved for infinite $\l$ and finite $n>0$ that cardinal arithmetic
equations were shown to hold absolutely in an end-segment of the
cardinals. Shelah explained in the introduction to \cite{sh:460} why
he considered this discovery to be a solution of Hilbert's first
problem, necessarily reinterpreted after Cohen's proof of the
independence of the continuum hypothesis as what are the rules of
cardinals arithmetic, rather than what is the size of the continuum.


We present here a method, based on Shelah's revised GCH theorem, for
proving combinatorial theorems for an arbitrary infinite cardinal
$\nu$ in all sufficiently large cardinalities, in a similar way to
that in which theorems for a finite $n$ are proved in all infinite
cardinals. This approach was first used in \cite{NogaShelah} to
extends to infinite graphs results about finite graph colorings. Here
a general methods is presented.  The particular applications of the
method here concern the old subject of splitting families of sets.

In Section 2 the arithmetic properties of the density function
$\mathcal D(\l,\k_1,\k_2)$ for $\k_1\le\k_2\le \l$ are
handled. $\mathcal D(\l,\k_1,\k_2)$ is the least number of
$\k_1$-subsets of $\l$ required to enter every $\k_2$-subset of $\l$, 
and is clearly bounded from above by the exponent $\l^{\k_1}$. Unlike
the exponent function $\l^{\k_1}$, the arithmetic of $\mathcal
D(\l,\nu,\k)$ is robust. It is not affected by Cohen or by Easton
forcing, the equality $\mathcal D(\l,\nu,\nu^+)=\l$ for all $\l\ge
2^\nu$ holds easily in models of set theory which obey mild
restrictions on cardinal exponentiation, and, most importantly,
$\mathcal D(\l,\nu,\k)=\l$ for all sufficiently large $\l\ge\k$ in
\emph{all} models of ZFC as a consequence of Shelah's revised GCH
theorem.

In Section 3 an asymptotic filtration theorem for order-reversing set
operations is obtained. This theorem is an absolute weak infinitary
generalization of the well-known downwards Skolem-L\"owenheim
theorem. Instead of using the equation $\l^\n=\l$ --- which fails
periodically in the class of all infinite cardinals --- the proof
relies on the equations which govern the  density function.

\medbreak Miller \cite{miller} proved in 1937 a splitting theorem for
arbitrarily large $\rho$-uniform families of sets that satisfy the
condition $C(\rho^+,n)$ when $\rho\ge \aleph_0$ and $n$ is finite. The
condition $C(\rho^+,n)$ is that the intersection of any subfamily of
size at least $\rho^+$ has cardinality smaller than $n$. Miller's
proof relied on the equation $\l^n=\l$ and essentially used the
Skolem-L\"owenheim argument (although Miller most likely did not know
about Skolem's work at the time) to obtain filtrations. Theorem
\ref{main} below strengthens Miller's theorem and extends it to any
cardinal $\nu$ in place of the finite $n$ and any $\rho\ge
\beth_\om(\nu)$ in place of the infinite $\rho$ in Miller's
$C(\rho^+,n)$ condition. For a finite $n$, the term $\beth_\om(n)$ is
indeed $\aleph_0$.  The required filtrations are gotten by using
Section 2.

Several corollaries about splitting, comparing and coloring families
of sets are gotten from Theorem \ref{main}. Three of these results
were proved earlier by assuming additional axioms. Erd\H os and Hajnal
proved property B from $C(\nu, \rho^+)$ for $\rho>\nu^+$ from the
Generalized Continuum Hypothesis \cite{eh}, Komj\'ath obtained
disjoint refinements and essential disjointness from the same axiom
\cite{komclose} and Hajnal, Juh\'asz and Shelah obtained essential
disjointness from a weak form of the Singlar Cardinals
Hypothesis \cite{hjs}.

\section{ Arithmetic of Density}\label{densec}

In this Section we list some properties of  the arithmetic density
function $\mathcal D(\l,\k_1,\k_1)$. 

\begin{definition}[Density] \label{densitydef}
\begin{enumerate}
\item Let $\k_1\le \k_2\le \l$ be cardinals. A set $\mathcal D\su
[\l]^{\k_1}$ is \emph{dense in $[\l]^{\k_2}$} if for every
  $X\in [\l]^{\k_2}$ there is $Y\in \mathcal D$ such that $Y\su X$.
\item For $\k_1\le\k_2\le
 \l$ the \emph{$(\k_1,\k_2)$-density of $\l$}, denoted by $\mathcal
 D(\theta,\k_1,\k_2)$,  is  the least
 cardinality of a set $\mathcal D\su [\theta]^{\k_1}$ which is dense
 in $[\l]^{\k_2}$.
\item For $\k\le$ the \emph{$\k$-density of $\l$}, denoted by
  $\mathcal D(\l,\k)$,  is  $\mathcal
  D(\l,\k,\k)$. 
\end{enumerate}
\end{definition} 

\begin{claim}[Basic properties of density]\label{densitybasic}
Suppose $\k_1\le \k_2\le \l$.
\begin{enumerate}
\item If $X\su Y$ and $\mathcal D\su [Y]^{\k_1}$ is dense in $[Y]^{\k_2}$, then $\mathcal D\cap
[X]^{\k_1} $ is dense in $[X]^{\k_2}$. 
\item (Monotonicity)  $\mathcal D(\l,\k_1,\k_2)$ is increasing in $\l$
  and if $\k'_1\le \k_1\le \k_2\le \k'_2\le \l$ then $\mathcal
  D(\l,\k'_1,\k'_2)\le \mathcal D(\l,\k_1,\k_2)$.
\item 
Suppose $\lng X_i:i<\theta\rng$ is
  $\su$-increasing, $X=\bigcup_{i<\theta}X_i$, $\k_1\le \k_2\le |X_0|$
  and $\mathcal D_i\su [X_i]^{k_1}$ is dense in
  $[X_i]^{\k_2}$. Then $\mathcal D:=\bigcup_{i<\theta}\mathcal D_i$ is
  dense in $[X]^{\k_3}$ for all $\k_3>\k_2$ such that $\k_3 \le |X|$,
  and if $\cf\theta\not=\cf\k_2$ then $\mathcal D$ is dense also in
  $[X]^{\k_2}$ .
\item(Continuity at limits) If $\l=\lng
  \l_i:i<\theta\rng$ is increasing with limit $\l$,
  and $\mathcal D(\l_i,\k_1,\k_2)\le \l$, then
  $\mathcal D(\l,\k_1,\k_3)=\l$ for all $\k_3>\k_2$  such that $\k_3 \le |X|$, and if
  $\cf\l=\cf\k_2$ then also $\mathcal D(\l,\k_1,\k_2)=\l$.
\item If $\cf\mu=\cf\k<\mu$ then $\mathcal D(\mu,\k)>\mu$.
\item If $\mu$ is a strong limit cardinal, then $\mathcal
  D(\mu,\k)=\mu$ for all $\k<\mu$ such that $\cf\k\not=\cf\mu$. 
\item $\mathcal D(\l,\k)$ is \emph{not} increasing in $\k$ and the inequality in (2) can be strict.
\end{enumerate}
\end{claim}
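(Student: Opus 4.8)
The plan is to handle the seven items in dependency order, resting everything on one combinatorial lemma for (3) and then harvesting (4)--(7) from it. Items (1) and (2) are routine. For (1), any $Z\in[X]^{\kappa_2}$ is also in $[Y]^{\kappa_2}$, so density in $[Y]^{\kappa_2}$ yields $W\in\mathcal D$ with $W\su Z\su X$, whence $W\in\mathcal D\cap[X]^{\kappa_1}$. Monotonicity in $\lambda$ in (2) is exactly (1) applied to $X=\lambda\su Y=\lambda'$. For the second inequality of (2) I take an optimal $\mathcal D$ witnessing $\mathcal D(\lambda,\kappa_1,\kappa_2)$ and replace each $Y\in\mathcal D$ by a fixed $Y'\in[Y]^{\kappa_1'}$ (legal since $\kappa_1'\le\kappa_1$); every $X\in[\lambda]^{\kappa_2'}$ contains some $W\in[\lambda]^{\kappa_2}$, hence some $Y\in\mathcal D$, so $Y'\su Y\su X$, and $\{Y':Y\in\mathcal D\}$ is dense in $[\lambda]^{\kappa_2'}$ with size $\le|\mathcal D|$.

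The heart of the claim is (3), and I would base it on the lemma that a $\su$-increasing union $\bigcup_{i<\theta}A_i$ with every $|A_i|<\kappa_2$ has size $\le\kappa_2$. This follows from the first-appearance map $s(a)=\min\{i:a\in A_i\}$: its sublevels $s^{-1}([0,i])=A_i$ have size $<\kappa_2$, so the index set $I=\ran(s)$ satisfies $|I\cap[0,i]|<\kappa_2$ for all $i$, forcing $|I|\le\kappa_2$ (otherwise the element of $I$ in position $\kappa_2$ would produce a sublevel of size $\ge\kappa_2$), whence $|\bigcup_iA_i|=\sum_{i\in I}|s^{-1}(i)|\le\kappa_2\cdot\kappa_2=\kappa_2$. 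Applying this with $A_i=Z\cap X_i$ for $Z\in[X]^{\kappa_3}$: if all $|Z\cap X_i|<\kappa_2$ then $|Z|\le\kappa_2<\kappa_3$, a contradiction, so some $|Z\cap X_i|\ge\kappa_2$ and density of $\mathcal D_i$ in $[X_i]^{\kappa_2}$ gives $Y\in\mathcal D_i\su\mathcal D$ with $Y\su Z$. For the case $\kappa_3=\kappa_2$ I would show that a $Z\in[X]^{\kappa_2}$ with all $|Z\cap X_i|<\kappa_2$ forces $\cf\kappa_2=\cf\theta$ (the sizes $|Z\cap X_i|$ yield a non-decreasing cofinal map from the regular $\cf\theta$ into $\kappa_2$); hence under $\cf\theta\ne\cf\kappa_2$ no such $Z$ exists and $\mathcal D$ is dense in $[X]^{\kappa_2}$ as well.

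Items (4) and (6) then follow. For (4) I choose optimal $\mathcal D_i\su[\lambda_i]^{\kappa_1}$ of size $\mathcal D(\lambda_i,\kappa_1,\kappa_2)\le\lambda$, set $X_i=\lambda_i$, and apply (3): $\bigcup_i\mathcal D_i$ is dense in $[\lambda]^{\kappa_3}$ for $\kappa_3>\kappa_2$, and in $[\lambda]^{\kappa_2}$ when $\cf\lambda\ne\cf\kappa_2$ (the printed ``$\cf\lambda=\cf\kappa_2$'' appears to be a typo for ``$\ne$'', matching (3) and the failure recorded in (5)); its size is $\le\theta\cdot\lambda=\lambda$. The reverse inequality $\mathcal D(\lambda,\kappa_1,\kappa_3)\ge\lambda$ is immediate: a dense family must have union of size $\lambda$ (else a $\kappa_3$-set inside the uncovered part escapes), so $|\mathcal D|\cdot\kappa_1\ge\lambda$, and $\kappa_1<\lambda$ forces $|\mathcal D|\ge\lambda$. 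For (6), strong limitness gives $|[\mu_i]^\kappa|=\mu_i^\kappa\le 2^{\max(\mu_i,\kappa)}<\mu$, so $\mathcal D_i=[\mu_i]^\kappa$ is trivially dense of size $<\mu$; since $\cf\mu\ne\cf\kappa$, (3) makes $\bigcup_i\mathcal D_i$ dense already in $[\mu]^\kappa$ of size $\le\mu$, and the lower bound gives $\mathcal D(\mu,\kappa)=\mu$.

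The genuinely hard item is (5), and I expect the main obstacle to live there. I would fix cofinal sequences $\lng\mu_i:i<\theta\rng$ and $\lng\kappa_i:i<\theta\rng$ in $\mu$ and $\kappa$, put $B_i=[\mu_i,\mu_{i+1})$, and build $X=\bigcup_ix_i$ with $x_i\in[B_i]^{\kappa_i}$, so that $|X|=\kappa$ and $|X\cap\mu_i|<\kappa$ for every $i$. For a putative dense $\{Y_\alpha:\alpha<\mu\}$, any $Y_\alpha$ with some $|Y_\alpha\cap B_i|>\kappa_i$ is automatically not contained in $X$; the threat is the \emph{thin} $Y_\alpha$, meeting every $B_i$ in $\le\kappa_i$, which by $\cf\kappa=\theta$ must meet cofinally many blocks. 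The crux is to choose the $x_i$ so that each thin $Y_\alpha$ has $Y_\alpha\cap B_i\not\su x_i$ for some $i$: this is a blockwise covering problem whose per-block budgets $\kappa_i$ sum to exactly $\kappa$ and, precisely because $\cf\mu=\cf\kappa$, stay in step with the cofinally spread thin sets, so that each $Y_\alpha$ can be assigned its own killing block; making this assignment rigorous (a Hall-type or pressing-down argument across the $\theta$ blocks) is the delicate point, and the hypothesis $\cf\mu=\cf\kappa$ is exactly what keeps the budgets abreast of the sets one must defeat. Finally, (7) combines (5) and (6): for $\mu$ a strong limit singular with $\theta=\cf\mu$, one gets $\mathcal D(\mu,\theta)>\mu=\mathcal D(\mu,\theta^+)$ (as $\cf\theta^+\ne\cf\mu$), so $\mathcal D(\mu,\cdot)$ is not monotone, while $\mathcal D(\mu,\theta,\theta)>\mu=\mathcal D(\mu,\theta,\kappa')$ for a suitable $\kappa'>\theta$ with $\cf\kappa'\ne\cf\mu$ shows the inequality in (2) can be strict.
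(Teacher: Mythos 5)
The one genuine gap is item (5), exactly where you predicted the obstacle would live: your argument stops at ``making this assignment rigorous (a Hall-type or pressing-down argument across the $\theta$ blocks) is the delicate point.'' That step is the entire content of (5) and it is never carried out; since your (7) is deduced from (5), it too is left unproven. The paper takes a shorter route that avoids the block-assignment problem altogether: the Zermelo--K\"onig diagonalization in fact produces an \emph{almost disjoint} family $\mathcal F\su[\mu]^{\kappa}$ (pairwise intersections of size $<\kappa$) of cardinality $>\mu$ (Tarski's classical result, cited in the paper), and given such an $\mathcal F$, if $\mathcal D\su[\mu]^{\kappa}$ were dense with $|\mathcal D|\le\mu$, then each $A\in\mathcal F$ would contain some $D_A\in\mathcal D$, so by pigeonhole two distinct $A,B\in\mathcal F$ would satisfy $D_A=D_B$, giving $|A\cap B|\ge\kappa$, a contradiction. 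For completeness, such an $\mathcal F$ exists in ZFC: by recursion on $\alpha<\mu^+$ build $f_\alpha\in\prod_{i<\theta}\mu_{i+1}$ pairwise \emph{eventually} different (write $\alpha=\bigcup_{i<\theta}a_i$ with $\lng a_i\rng$ increasing, $|a_i|\le\mu_i$, and pick $f_\alpha(i)\in\mu_{i+1}\sm\{f_\beta(i):\beta\in a_i\}$), then replace each point $(i,f_\alpha(i))$ of the graph by a block of size $\kappa_i$; the resulting sets have size $\sum_{i<\theta}\kappa_i=\kappa$ and pairwise intersections of size $\sum_{i\in b}\kappa_i<\kappa$ with $b$ bounded in $\theta$.

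Your own direct route can in fact be completed, and needs neither Hall nor Fodor: split the putative dense family into thick sets (some trace $|Y\cap B_i|>\kappa_i$), which are defeated automatically, and thin sets; list the thin ones as an increasing union $\bigcup_{i<\theta}\mathcal E_i$ with $|\mathcal E_i|\le\mu_i$; each thin set meets cofinally many blocks (otherwise its size would be at most $\kappa_{i_0}\cdot|i_0|<\kappa$), so assign to each $D\in\mathcal E_i$ a block $j(D)>i$ that it meets and a designated point $x_D\in D\cap B_{j(D)}$; block $j$ then receives at most $\sum_{i<j}\mu_i\le\mu_j<|B_j|$ designated points, so one can choose $x_j\in[B_j\sm\{x_D:j(D)=j\}]^{\kappa_j}$, and no thin set is contained in $X=\bigcup_{j<\theta}x_j$. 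Apart from (5) and (7), your proposal is correct and in places cleaner than the paper's: the first-appearance lemma handles (3) uniformly for all $\kappa_3>\kappa_2$, and your diagnosis that ``$\cf\lambda=\cf\kappa_2$'' in (4) is a typo for ``$\not=$'' is confirmed both by (5) and by the way the paper later invokes (4) (only for $\cf\lambda\not=\cf\kappa_2$). One small repair in (3): your parenthetical for the case $\kappa_3=\kappa_2$ (that the traces form a map cofinal in $\kappa_2$) is valid only when $\kappa_2$ is a limit cardinal; if $\kappa_2=\sigma^+$ all traces may have size $\le\sigma$, and you must instead argue with the first-appearance image $I$: it is cofinal in $\theta$ (else $Z\su X_j$ for some $j$), all its proper initial segments have size $\le\sigma$, hence it has order type $\kappa_2$, so $\cf\theta=\kappa_2=\cf\kappa_2$ anyway.
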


\begin{proof} Item (1) is follows from the definition of density. The
  monotonicity in $\l$ in (2) follows from (1). 

  Assume $\k'_1\le \k_1\le \k_2\le \k'_2\le \l$,  let $\mathcal D\su
  [\l]^{\k_1}$ be an arbitrary dense set in $[\l]^{\k_2}$ and for each
  $X\in \mathcal D$ fix $X'\in \mathcal [X]^{\k'_1}$. Let $\mathcal
  D'=\{X': X\in \mathcal D\}$. Given any $Y\in [\l]^{\k'_2}$ there is
  some $X\in \mathcal D$ such that $X\su Y$, hence $X'\su Y$. Clearly,
  $|\mathcal D'|\le |\mathcal D|$. Thus, $\mathcal D(\l,\k'_1,\k'_2)\le
  \mathcal D(\k_1,\k_2)$.

  To prove (3) assume first that $\cf\theta\not=\cf\k_2$ and let $Z\in
  [X]^{\k_2}$ be arbitrary. Since $\cf\k_2\not=\cf\theta$ there is
  some $i<\theta$ such that $|Z\cap X_i|=\k_2$, and thus there is some
  $Y\in \mathcal D_i\su \mathcal D$ such that $Y\su Z$. This
  establishes the density of $\mathcal D$ in $[X]^{\k_2}$; as
  $[X]^{\k_2}$ is dense in $[X]^{\k_3}$ for all $\l\ge\k_3\ge\k_2$,
  the density of $\mathcal D$ in $[X]^{\k_3}$ follows. Now it remains
  to prove the density of $\mathcal D$ in $[X]^{k_3}$ for
  $\l\ge\k_3>\k_2$ when $\cf\theta=\cf\k_2$.  Given $Z\in [X]^{\k_3}$
  it suffices to show that $|Z\cap X_i|\ge \k_2$ for some
  $i<\theta$. As $\k_3>\k_2$ and $\cf\k_2=\cf\theta$ actually more is true: there is
  $i<\theta$ such that $|Z\cap X_i|>\k_2$ (or else
  $|Z|\le\cf\k_2\times \k_2=\k_2$).

Item (4) follows
  from (3).

  The inequality (5) follows from the standard diagonalization
  argument in the proof of the Zermelo-K\" onig lemma which, in fact,
  proves that there exists an almost disjoint $\mathcal F\su [\mu]^\k$
  of cardinality $>\mu$. This implies that $\mathcal D(\mu,\k)>\mu$.

  For (6), let $\mu$ be a strong limit and let $\k<\mu$ be
  arbitrary. There is an unbounded set of cardinals $\l<\mu$ below $\mu$ which
  satisfy $\l^\k=\l$ hence $\mathcal D(\mu,\k)=\mu$ follows from (4)
  if $\cf\mu\not=\cf\k$. 

  To see (7) let $\nu$ be an arbitrary cardinal.  By (5), 
  $\mathcal D(\beth_\om(\nu),\aleph_0)>\beth_\om(\nu)$ while by
  (6) $\mathcal D(\beth_\om(\nu),\aleph_1)=\beth_\om(\nu)$.
  Similarly, $\mathcal
  D(\beth_\om(\nu),\aleph_0)>\beth_\om(\nu)=\mathcal
  D(\beth_\om(\nu),\aleph_0,\aleph_1)=\mathcal
  D(\beth_\om(\nu),\aleph_1)$ and $\mathcal
  D(\beth_{\om_1}(\nu),\aleph_0,\aleph_1)=\beth_{\om_1}(\nu)<\mathcal
  D(\beth_{\om_1}(\nu),\aleph_1)$, so (7) is proved. 
\end{proof}

The next claim provides a simple sufficient condition for the validity
of the equation $\mathcal
D(\l,\k_1,\k_2)=\l$ for all $\l$ in an end-segment of the
cardinals. 

\begin{claim}\label{doubledense-gen}
Suppose $\theta>\k_2>\k_1\ge \aleph_0$, $\mathcal D(\theta,\k_1,\k_2)=\theta$ and for every
$\l\ge \theta$, if $\cf\l=\cf\k_2$ there exists some $\nu$
such that $\k_1\le \nu<\k_2$ and 
$\l=\sup\{\d:\d<\l \wedge \mathcal D(\d,\k_1,\nu)\le \l\}$. Then
$\mathcal D(\l,\k_1,\k_2)=\l$ for every $\l\ge \theta$.
\end{claim}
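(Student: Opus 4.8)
The plan is to prove $\mathcal D(\l,\k_1,\k_2)=\l$ for every $\l\ge\theta$ by induction on $\l$, with the base case $\l=\theta$ being the hypothesis. First I would dispose of the lower bound, which holds for all $\l$ with $\k_1<\k_2\le\l$: if $\mathcal D\su[\l]^{\k_1}$ were dense in $[\l]^{\k_2}$ with $|\mathcal D|<\l$, then $|\bigcup\mathcal D|\le|\mathcal D|\cdot\k_1<\l$, so $\l\sm\bigcup\mathcal D$ has size $\l\ge\k_2$ and any $Z\in[\l\sm\bigcup\mathcal D]^{\k_2}$ contains no member of $\mathcal D$. Thus only the bound $\mathcal D(\l,\k_1,\k_2)\le\l$ requires work, and in the inductive step I would split according to whether $\cf\l=\cf\k_2$.

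The case $\cf\l\ne\cf\k_2$ is the routine one, handled by the induction hypothesis and Claim~\ref{densitybasic}(3). Fix an increasing filtration $\lng A_i:i<\cf\l\rng$ with $\bigcup_i A_i=\l$ and $\theta\le|A_i|<\l$. By induction $\mathcal D(|A_i|,\k_1,\k_2)=|A_i|\le\l$, so pick $\mathcal D_i\su[A_i]^{\k_1}$ dense in $[A_i]^{\k_2}$ with $|\mathcal D_i|\le\l$. Since the index length is $\cf\l$ and $\cf(\cf\l)=\cf\l\ne\cf\k_2$, Claim~\ref{densitybasic}(3) shows $\bigcup_i\mathcal D_i$ is dense in $[\l]^{\k_2}$, and its size is at most $\cf\l\cdot\l=\l$. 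This covers every regular $\l>\theta$ (where $\cf\l=\l>\k_2\ge\cf\k_2$) and every singular $\l$ with $\cf\l\ne\cf\k_2$.

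The case $\cf\l=\cf\k_2$ is where the extra hypothesis enters, precisely because a union of $\k_2$-dense families is in general only $\k_3$-dense for $\k_3>\k_2$, not at level $\k_2$ itself. Here I would fix $\nu$ with $\k_1\le\nu<\k_2$ as supplied, so that $C:=\{\d<\l:\mathcal D(\d,\k_1,\nu)\le\l\}$ is cofinal in $\l$. Choosing $\lng\d_i:i<\cf\l\rng$ increasing and cofinal in $C$, setting $A_i:=\d_i$ (the ordinal, of cardinality $\d_i\in C$, so $\bigcup_iA_i=\l$), and fixing $\mathcal E_i\su[A_i]^{\k_1}$ dense in $[A_i]^{\nu}$ with $|\mathcal E_i|\le\l$, I claim $\mathcal D:=\bigcup_{i<\cf\l}\mathcal E_i$, of size at most $\cf\l\cdot\l=\l$, is dense in $[\l]^{\k_2}$. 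Granting the combinatorial lemma below, for $Z\in[\l]^{\k_2}$ one picks $W\in[Z\cap A_i]^{\nu}$ and uses density of $\mathcal E_i$ to get $Y\in\mathcal E_i$ with $Y\su W\su Z$, so $Y\in\mathcal D$ witnesses density.

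The hard part will be the lemma: for every $Z\in[\l]^{\k_2}$ there is $i<\cf\l$ with $|Z\cap A_i|\ge\nu$. I would argue by contradiction, assuming $|Z\cap A_i|<\nu$ for all $i$, and the two subcases of $\cf\l=\cf\k_2$ diverge. If $\k_2$ is singular then $\cf\l=\cf\k_2<\k_2$, and $|Z|\le\cf\l\cdot\sup_i|Z\cap A_i|\le\max(\cf\l,\nu)<\k_2$, contradicting $|Z|=\k_2$. If $\k_2$ is regular then $\cf\l=\k_2$; defining $f\colon Z\to\cf\l$ by $f(z)=\min\{i:z\in A_i\}$ gives $\{z:f(z)\le i\}=Z\cap A_i$ of size $<\nu$ for each $i$, yet any $\nu$ distinct elements of $Z$ have $f$-values bounded below $\cf\l=\k_2$ by some $i^*$ (by regularity, as $\nu<\k_2$), forcing $|Z\cap A_{i^*}|\ge\nu$. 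The regular subcase is the genuinely delicate point, since there the naive cardinal-arithmetic bound alone fails and one must exploit regularity of $\k_2$; once it is in place, the induction closes.
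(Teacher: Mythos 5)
Your proof is correct and follows essentially the same route as the paper: induction on $\l\ge\theta$, splitting on whether $\cf\l=\cf\k_2$, handling the unequal case by unions of dense families supplied by the induction hypothesis, and in the equal case using the hypothesis to drop to level $\nu<\k_2$ so that a union of $\nu$-dense families is still dense at level $\k_2$. The difference is only presentational: the paper cites Claim~\ref{densitybasic}(3)--(4) for exactly these two steps (leaving the lower bound $\mathcal D(\l,\k_1,\k_2)\ge\l$ and the successor-cardinal case implicit), whereas you re-prove those continuity facts inline.
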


\begin{proof}
  By induction on $\l\ge \theta$. For $\l=\theta$ the equality
  $\mathcal D(\l,\k_1,\k_2)=\l$ is assumed, and if $\l>\theta$ satisfies
  $\cf\l\not=\cf\k_2$ then $\mathcal D(\l,\k_1,\k_2)=\l$ by
  (4). Assume then that $\cf\l=\cf\k_2$. By the assumption, there
  is $\k_1\le \nu<\k_2$ and   $\lng
\l_i:i<\cf\k_2\rng$, an increasing sequence of cardinals with limit
$\l$ such that $\mathcal D(\l_i,\k_1,\nu,)\le \l$. Now
$\mathcal D(\l,\k_1,\k_2)=\l$ by (4).
\end{proof}

\begin{corollary} \begin{enumerate}
  \item Let $\nu$ be an infinite cardinal, $\k=\cf\k>\nu$ and assume
    that $\theta\ge \k$ satisfies such that $\mathcal
    D(\theta,\nu,\k)=\theta$ and for every $\mu>\theta$ all limit
    cardinals in the interval $[\mu,\mu^\nu)$ are of cofinality
    strictly smaller than $\k$. Then $\mathcal D(\l,\nu,\k)=\l$ for
    all $\l\ge \theta$.
  \item The SCH implies that for every infinite $\nu$ it holds that
    $\mathcal D(\l,\nu,\nu^+)=\l$ for all $\l\ge 2^{\nu}$.
\end{enumerate}
\end{corollary}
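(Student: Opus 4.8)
The plan for part (1) is to deduce it from Claim~\ref{doubledense-gen} applied with $\k_1=\nu$ and $\k_2=\k$. The base point is handed to us, since $\mathcal D(\theta,\nu,\k)=\theta$ is part of the hypothesis. Because $\k$ is regular, $\cf\k=\k$, so the only instances of the sufficient condition in Claim~\ref{doubledense-gen} that require checking are the cardinals $\l\ge\theta$ with $\cf\l=\k$; for all other $\l$ the equality propagates by continuity, Claim~\ref{densitybasic}(4). For such an $\l$ I will verify the sufficient condition using the internal witness $\nu'=\nu$ (the existentially quantified cardinal in Claim~\ref{doubledense-gen}, which I rename $\nu'$ to avoid a clash; note $\nu\le\nu'<\k$ since $\k>\nu$). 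Because $\mathcal D(\d,\nu,\nu)\le \d^\nu$ trivially and $\mathcal D(\d,\nu,\nu')\le\mathcal D(\d,\nu,\nu)$ by monotonicity, Claim~\ref{densitybasic}(2), it is enough to show that $\{\d<\l:\d^\nu\le\l\}$ is unbounded in $\l$, i.e. that $\l=\sup\{\d<\l:\d^\nu\le\l\}$.

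The heart of the argument --- and the step I expect to be the main obstacle, since it is where the interval hypothesis must be converted into information about $\cf\l$ --- is the following. Suppose the set were bounded: there is $\mu$ with $\theta<\mu<\l$ such that $\d^\nu>\l$ for every $\d\in[\mu,\l)$; in particular $\mu^\nu>\l$, so $\mu^\nu\ge\l^+$. Then $\mu\le\l<\l^+\le\mu^\nu$ places $\l$ inside the interval $[\mu,\mu^\nu)$. Moreover $\l$ is a limit cardinal: it cannot be a successor, as a successor cardinal has cofinality equal to itself, whereas $\cf\l=\k<\l$. By the interval hypothesis applied to this $\mu>\theta$, every limit cardinal in $[\mu,\mu^\nu)$ has cofinality $<\k$, so $\cf\l<\k$, contradicting $\cf\l=\k$. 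Hence $\{\d<\l:\d^\nu\le\l\}$ is unbounded, the condition of Claim~\ref{doubledense-gen} holds, and $\mathcal D(\l,\nu,\k)=\l$ for all $\l\ge\theta$.

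For part (2) I would apply part (1) with $\k=\nu^+$ (regular, and $>\nu$) and $\theta=2^\nu$. The base point $\mathcal D(2^\nu,\nu,\nu^+)=2^\nu$ holds because $\mathcal D(\l,\k_1,\k_2)\ge\l$ always --- a dense family whose union has size $<\l$ leaves a set of size $\l$ uncovered, hence omits some $\k_2$-set disjoint from that union --- while $\mathcal D(2^\nu,\nu,\nu^+)\le(2^\nu)^\nu=2^\nu$. It remains to verify the interval hypothesis: for every $\mu>2^\nu$, all limit cardinals in $[\mu,\mu^\nu)$ must have cofinality $<\nu^+$, i.e. $\le\nu$. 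This is where SCH enters, through the exponentiation formula it forces: for $\mu>2^\nu$ one has $\mu^\nu=\mu$ when $\cf\mu>\nu$ and $\mu^\nu=\mu^+$ when $\cf\mu\le\nu$. In the first case $[\mu,\mu^\nu)=\emptyset$ and the requirement is vacuous; in the second $[\mu,\mu^\nu)=[\mu,\mu^+)$ contains at most the single cardinal $\mu$, whose cofinality, if $\mu$ is a limit cardinal at all, is $\le\nu<\nu^+$. Thus the hypothesis of part (1) is met and $\mathcal D(\l,\nu,\nu^+)=\l$ for all $\l\ge2^\nu$. The only piece here that is not purely formal is recalling (or re-deriving) the SCH exponentiation formula for $\mu^\nu$, which is standard; once it is in hand, the interval collapses to a single cardinal and the verification is immediate.
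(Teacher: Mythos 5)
Your proposal is correct and takes essentially the same route as the paper: both reduce to Claim~\ref{doubledense-gen} with the witness cardinal taken to be $\nu$ itself, use the interval hypothesis to show that for $\l>\theta$ of cofinality $\k$ the cardinals $\d<\l$ with $\d^\nu\le\l$ are cofinal in $\l$ (the paper asserts this directly, you argue the contrapositive), and obtain (2) by applying (1) with $\theta=2^\nu$, $\k=\nu^+$ via the standard SCH exponentiation formula. Your write-up is in fact more explicit than the paper's at two points the paper leaves implicit --- the lower bound $\mathcal D(2^\nu,\nu,\nu^+)\ge 2^\nu$ and the collapse of the intervals $[\mu,\mu^\nu)$ under SCH --- but the mathematical content is the same.
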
 

\begin{proof}If $\l\ge\theta$ and $\cf\l=\k$ then $\theta^\mu<\l$ for all
  $\theta<\l$, so $\l=\sup\{\theta^\nu: \theta<\l\}$. If
  $\d=\theta^\nu$ then $\d^\nu=\d$ and trivially
  $\mathcal D(\d,\nu)=\d$. Thus (1) follows from Claim
  \ref{doubledense-gen}.  Item (2) follows from (1) since by the SCH
  for every cardinal $\l\ge 2^{\nu}$ it holds that
  $\l^{\nu}\in \{\l,\l^+\}$. 
\end{proof}

By the Corollary above, the existence of a bound on the gaps between
$\mu$ and $\mu^\k$ in some end segment of the cardinals suffices to
cover the end segment by a single equation of the form $\mathcal
D(x,\nu,\k)=x$.  However, models of set theory with arbitrarily large
gaps between $\mu$ and $\mu^{\aleph_0}$ can be built from a proper
class of suitable large cardinals by the methods in \cite{gm}.

The next Lemma and its Corollary \ref{densityalln} below, hold in
every model of ZFC.

\begin{definition} [Shelah's revised power] \label{rp} Let $\theta\le
 \l$ be cardinals. A set $\mathcal D\su [\l]^\theta$ is \emph{weakly
   covering} if for every $X\in [\l]^\theta$ there exists $\mathcal
 Y\in [\mathcal D]^{<\theta}$ such that $X\su \bigcup \mathcal Y$.

 \emph{Shelah's revised $\theta$-power of $\l$}, denoted
 $\l^{[\theta]}$, is the least cardinality of a weakly covering
 $\mathcal D\su [\l]^\theta$ .
\end{definition}

\begin{theorem}[Shelah's Revised GCH Theorem \cite{sh:460}] For
 every strong limit  cardinal $\mu$, for all $\l\ge
 \mu$, for every sufficiently large regular
 $\theta<\mu$,
\[\l^{[\theta]}=\l.\]
\end{theorem}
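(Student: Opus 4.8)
The lower bound $\l^{[\th]}\ge\l$ is immediate: since every ordinal below $\l$ lies in some $X\in[\l]^{\th}$, the union of a weakly covering family is all of $\l$, and as $\th<\l$ such a family must have at least $\l$ members. The content is therefore the upper bound $\l^{[\th]}\le\l$, which I would prove by induction on $\l\ge\mu$, after recording that $\l^{[\th]}$ is exactly the covering number $\operatorname{cov}(\l,\th^+,\th^+,\th)$ --- the least size of a family $\mathcal P\su[\l]^{\le\th}$ such that every member of $[\l]^{\le\th}$ is contained in the union of fewer than $\th$ members of $\mathcal P$ --- so that the arithmetic of Claim \ref{densitybasic} and, ultimately, pcf theory can be brought to bear.

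The base case $\l=\mu$ is already instructive, and easy for regular $\th$ with $\cf\mu<\th<\mu$. Write $\mu=\bigcup_{i}\mu_i$ along a cofinal sequence of length $\cf\mu$; since $\mu$ is strong limit we may choose each $\mu_i$ with $\mu_i^{\th}=\mu_i$, so that $\mathcal P=\bigcup_i[\mu_i]^{\le\th}$ has size $\mu$, and any $X\in[\mu]^{\th}$ is the union of its fewer than $\th$ initial pieces $X\cap\mu_i$, each a single member of $\mathcal P$. The successor step $\l=\chi^+$ is equally soft: here $\l$ is regular and $>\th$, so every $X\in[\l]^{\th}$ is bounded in some $\b<\l$ with $|\b|\le\chi$; transporting the inductive family for $\chi^{[\th]}=\chi$ through a bijection $\b\to|\b|$ and taking the union over the $\chi^+$ ordinals $\b$ yields a weakly covering family of size $\chi^+$. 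The limit step with $\cf\l\ne\th$ follows the same pattern, splitting into the bounded case $\cf\l>\th$ and the case $\cf\l<\th$, where $X=\bigcup_{i<\cf\l}(X\cap\l_i)$ is covered by fewer than $\th$ members because $\th$ is regular and $\cf\l<\th$; this is precisely the union-at-limits mechanism of Claim \ref{densitybasic}(3),(4), whose cofinality hypotheses reappear here.

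All of this leaves a single genuinely hard case: $\l$ a limit cardinal with $\cf\l=\th$ and $X\in[\l]^{\th}$ cofinal in $\l$. The naive decomposition $X=\bigcup_{i<\th}(X\cap\l_i)$ now uses exactly $\th$ pieces, one over the permitted budget of fewer than $\th$, and no choice of filtration repairs this. Covering such cofinal sets economically is exactly the phenomenon that pcf theory measures: I would pass to a scale in a product $\prod_{i<\th}\l_i$ modulo a suitable ideal, use an exact upper bound to cut down the number of cofinal ``types'' of the sets $X$, and invoke Shelah's theorem bounding $\operatorname{cov}$ in terms of the revised power and the $\pp$ function to conclude $\operatorname{cov}(\l,\th^+,\th^+,\th)=\l$ for all sufficiently large regular $\th$.

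The main obstacle is twofold, and both halves are pcf-theoretic. First, the cofinality-matching case above cannot be handled by elementary cardinal arithmetic --- it is the one place where $\l^{\th}>\l$ can occur and where the true cofinalities of small products must be controlled. Second, and more subtly, the induction needs the lower-cardinal instances at one and the same $\th$: to treat a limit $\l$ one needs $\l_i^{[\th]}=\l_i$ for unboundedly many $\l_i<\l$, so the per-$\l$ thresholds $\th_0(\l)$ must be shown to remain bounded below $\mu$ uniformly. Producing this uniform threshold is the heart of Shelah's argument and is where the full strength of the pcf structure theory (localization and the $\operatorname{cov}=\pp$ theorems) is indispensable; without it a bare induction stalls at the cardinals whose cofinality keeps pace with $\th$.
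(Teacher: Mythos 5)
First, a point of orientation: the paper does not prove this statement at all. It is imported verbatim from Shelah \cite{sh:460} as a black box, and everything in Sections \ref{densec}--\ref{families} is built on top of it. So your proposal is not competing with an argument in the paper but with Shelah's original proof, which occupies a substantial part of \cite{sh:460} and rests on the pcf apparatus (the $\pp=\cov$ calculus and the localization theorem).

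Your reductions of the routine cases are correct: the lower bound, the identification of $\l^{[\th]}$ with the covering number $\cov(\l,\th^+,\th^+,\th)$, the base case at the strong limit $\mu$, the successor step, and the limit steps with $\cf\l\not=\th$ are all sound (the case $\cf\l<\th$ correctly uses regularity of $\th$, the same mechanism as in Claim \ref{densitybasic}(3),(4)). But the two items you defer --- the case $\cf\l=\th$ and the uniform boundedness below $\mu$ of the thresholds $\th_0(\l_i)$ needed to run the induction at a limit $\l$ --- are not side conditions; they are the entire theorem. Your plan for them is to ``invoke Shelah's theorem bounding $\cov$ in terms of the revised power and the $\pp$ function,'' which is circular: those bounds, together with the localization theorem that makes them usable simultaneously for cofinally many $\l_i<\l$, are precisely the content of \cite{sh:460} that is to be proved. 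The uniformity problem is also not a technicality that more careful bookkeeping could remove: if, say, $\cf\mu=\om$ and $\cf\l=\om_1$, the supremum of the $\cf\l$ many thresholds $\th_0(\l_i)<\mu$ may well equal $\mu$, so even granting every inductive instance the naive induction stalls exactly where you say it does. In short, what you have written is a correct reduction of the theorem to its hard core together with an accurate diagnosis that the core is pcf-theoretic; it is not a proof, and no elementary completion of the sketch is available --- one must reproduce (or cite, as the paper does) the rank, exact-upper-bound, and localization machinery of \cite{sh:460}.
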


\begin{lemma} \label{density} Let $\mu$ be a strong limit
  cardinal. For every $\l\ge\mu$ there is some $\theta(\l)<\mu$ such
  that for every $\theta<\mu$ with $\cf\theta>\theta(\l)$ it holds
  that $\mathcal D(\l,\theta)=\l$.
\end{lemma}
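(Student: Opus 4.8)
The plan is to prove the two inequalities $\mathcal D(\l,\theta)\ge\l$ and $\mathcal D(\l,\theta)\le\l$ separately: the first is elementary and uniform in $\l$, while the second is where the Revised GCH Theorem is brought to bear. For the lower bound, fix any dense $\mathcal D\su[\l]^\theta$ with $\theta<\l$; if $|\mathcal D|<\l$ then $A:=\bigcup\mathcal D$ has cardinality $<\l$, so any $X\in[\l\sm A]^\theta$ contains no member of $\mathcal D$, a contradiction. Hence $\mathcal D(\l,\theta)\ge\l$ whenever $\theta<\l$. The threshold $\theta(\l)$ will be chosen, using the Revised GCH Theorem, so large that $\l^{[\rho]}=\l$ holds for every regular $\rho$ with $\theta(\l)<\rho<\mu$; I will also insist that $\theta(\l)\ge\cf\l$ when $\cf\l<\mu$, so that the hypothesis $\cf\theta>\theta(\l)$ already forces $\cf\theta\ne\cf\l$ and thereby avoids the obstruction of \ref{densitybasic}(5).

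The core of the argument is the upper bound for regular $\theta$, where the plan is to convert a weakly covering family into a dense one. Fix a weakly covering $\mathcal D\su[\l]^\theta$ with $|\mathcal D|=\l^{[\theta]}=\l$. Given $X\in[\l]^\theta$, choose $\mathcal Y\in[\mathcal D]^{<\theta}$ with $X\su\bigcup\mathcal Y$; since $\theta$ is regular and $|\mathcal Y|<\theta$, the pigeonhole principle yields some $Y\in\mathcal Y$ with $|X\cap Y|=\theta$. Because $\mu$ is a strong limit and $\theta<\mu$, each $[Y]^\theta$ (with $|Y|=\theta$) has a dense subfamily $\mathcal E_Y$ of size at most $2^\theta<\mu\le\l$ — one may simply take $\mathcal E_Y=[Y]^\theta$. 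Then $\mathcal D':=\bigcup_{Y\in\mathcal D}\mathcal E_Y$ has size $\l\cdot 2^\theta=\l$ and is dense in $[\l]^\theta$: the member $Z\in\mathcal E_Y$ with $Z\su X\cap Y$ satisfies $Z\su X$. This gives $\mathcal D(\l,\theta)=\l$ for every regular $\theta\in(\theta(\l),\mu)$ and, crucially, for \emph{every} $\l\ge\mu$, since the pigeonhole step uses only the regularity of $\theta$ and nothing about $\l$.

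It then remains to treat singular $\theta$ with $\cf\theta>\theta(\l)$, and the plan is to reduce to the regular case by induction on $\l$ using the continuity of density. Write $\sigma:=\cf\theta$; the choice of $\theta(\l)$ guarantees $\cf\l\ne\sigma$. If $\l$ is singular, then decomposing $\l=\sup_{i<\cf\l}\l_i$ into smaller cardinals and invoking \ref{densitybasic}(4) with $\k_1=\k_2=\theta$ (whose limit clause applies precisely because $\cf\l\ne\cf\theta$, as in \ref{densitybasic}(3)), the inductively available bounds $\mathcal D(\l_i,\theta)\le\l$ yield $\mathcal D(\l,\theta)=\l$. If $\l$ is regular, then every $X\in[\l]^\theta$ is bounded, so $[\l]^\theta=\bigcup_{\gamma<\l}[\gamma]^\theta$ and $\mathcal D(\l,\theta)\le\sum_{\gamma<\l}\mathcal D(|\gamma|,\theta)$, which is at most $\l$ as soon as each summand is at most $\l$.

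The main obstacle lies exactly here. Among the cardinals $\l'<\l$ appearing in the singular-$\theta$ reduction there are some with $\cf\l'=\sigma=\cf\theta$, for which \ref{densitybasic}(5) forces $\mathcal D(\l',\theta)>\l'$; for these neither the filtration of \ref{densitybasic}(4) (its cofinality hypothesis fails) nor the regular-$\theta$ conversion of paragraph two (its pigeonhole fails for singular $\theta$, where a subcover of size $<\theta$ need not contain a piece of full size $\theta$) is available. What is needed is the boundary estimate $\mathcal D(\l',\theta)\le(\l')^+$, equivalently a family $\mathcal B\su[\l']^\theta$ of size $(\l')^+$ such that every $\theta$-set meets some member of $\mathcal B$ in a set of full size $\theta$; the difficulty is to produce members of full size $\theta$ while keeping the family small, since the naive product of the regular-level dense families has size $(\l')^{\sigma}>(\l')^+$. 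I expect to overcome this by applying the Revised GCH Theorem at the regular cardinal $\sigma=\cf\theta$ to organize subcovers of size $<\sigma$, in the spirit of Shelah's covering-number reductions, and then to feed the resulting bound $\mathcal D(\l',\theta)\le(\l')^+$ back into the induction to close the argument.
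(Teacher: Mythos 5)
Your lower bound and your regular-$\theta$ case are correct and coincide with the paper's own argument (close the weakly covering family under $\theta$-subsets, which costs only $2^\theta<\mu\le\l$, then use regularity for the pigeonhole). The genuine gap is exactly where you locate it: the singular-$\theta$ case is not proved, and the induction on $\l$ you propose cannot be repaired in the form given. There are in fact two problems, not one. First, the bounds you call ``inductively available'' are not available: the threshold in the lemma depends on the cardinal, and Shelah's theorem gives no uniformity across cardinals, so for $\l'<\l$ the induction hypothesis only yields $\mathcal D(\l',\theta')=\l'$ when $\cf\theta'>\theta(\l')$, and $\theta(\l')$ may well exceed the fixed $\cf\theta$; nor can you take a supremum of thresholds, since for regular $\l$ the decomposition $[\l]^\theta=\bigcup_{\gamma<\l}[\gamma]^\theta$ involves $\l\ge\mu$ many cardinals whose thresholds may be cofinal in $\mu$. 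Second, as you concede, at cardinals $\l'$ with $\cf\l'=\cf\theta$ you need $\mathcal D(\l',\theta)\le(\l')^+$, and the closing sentence of your proposal (``organize subcovers of size $<\sigma$ in the spirit of Shelah's covering-number reductions'') is a hope, not an argument; nothing in the proposal produces this estimate.

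The paper's proof avoids induction on $\l$ altogether, and that is the missing idea. Fix $\l$ and the single threshold $\theta(\l)$. Write $\theta=\sum_{i<\cf\theta}\theta_i$ with each $\theta_i$ regular and $>\theta(\l)$. By the regular case, applied at this same $\l$, fix dense $\mathcal D_i=\{X^i_\a:\a<\l\}\su[\l]^{\theta_i}$ of size $\l$; and, again by the regular case applied at the regular cardinal $\cf\theta>\theta(\l)$ to the set $\cf\theta\times\l$ of size $\l$, fix a dense family of size $\l$ in $[\cf\theta\times\l]^{\cf\theta}$, from which one extracts a family $\mathcal T$ of size $\l$ consisting of partial functions from $\cf\theta$ to $\l$ with domain cofinal in $\cf\theta$, such that every total $f:\cf\theta\to\l$ has a subfunction in $\mathcal T$. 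Then $\mathcal D:=\bigl\{\bigcup\{X^i_\a:\lng i,\a\rng\in Y\}:Y\in\mathcal T\bigr\}$ has size $\l$, consists of sets of size $\theta$ (domains are cofinal and the $\theta_i$ increase to $\theta$), and is dense in $[\l]^\theta$: given $Z\in[\l]^\theta$, the function $f(i)=\min\{\a:X^i_\a\su Z\}$ is well defined by density of each $\mathcal D_i$, and any $Y\in\mathcal T$ with $Y\su f$ yields a member of $\mathcal D$ contained in $Z$. Since everything happens at the one cardinal $\l$, neither the uniformity problem nor the boundary case $\cf\l'=\cf\theta$ ever arises. Your instinct to invoke the Revised GCH at $\sigma=\cf\theta$ was pointing at the right ingredient; the correct use of it is this composition of dense families at the fixed $\l$, not a cardinality bound at smaller cardinals.
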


\begin{proof}
  Let $\l\ge \mu$ be given and let $\theta(\l)<\mu$ be fixed
  by Shelah's Revised GCH theorem such
  that for every regular $\theta \in (\theta(\l),\mu)$ it holds that
  $\l^{[\theta]}=\l$.

  Assume first that $\theta\in (\theta(\l),\mu)$ is regular. Let
  $\mathcal D\su [\l]^\theta$ witness $\l^{[\theta]}=\l$ and, as
  $2^\theta<\l$, assume that $X\in \mathcal D\imply [X]^\theta\su
  \mathcal D$. If $Z\in [\l]^\theta$ is arbitrary, fix $\mathcal Y\in
  [\mathcal D]^{<\theta}$ such that $Z\su \bigcup \mathcal Y$. As
  $\theta$ is regular, there is $X\in \mathcal Y$ such that $Y:=X\cap
  Z$ has cardinality $\theta$. Since $Y\in \mathcal D$ and $Y\su Z$,
  we have shown that $\mathcal D$ is dense in $[\l]^\theta$.

Assume now that $\theta\in (\theta(\l),\mu)$ is singular and satisfies
$\cf\theta>\theta(\l)$. Write $\theta=\sum_{i<\cf\theta}\theta_i$ with
$\theta_i=\cf\theta_i>\theta(\l)$ for each $i$. Next fix dense
$\mathcal D_i=\{X^i_\a:\a<\l\}\su [\l]^{\theta_i}$ and dense $\mathcal
T'\su [\cf\theta\times \l]^{\cf\theta}$ with $|\mathcal T'|=\l$. The
set $\mathcal T:=\{Y\in \mathcal T: i<\cf\theta \imply |Y\cap (\{i\}\times
\l)|\le 1\}$, namely, all members of $\mathcal T'$ which are partial
functions from $\cf\theta$ to $\l$ with domain cofinal in
$\cf\theta$, has cardinality $\l$ and for every $f:\cf\theta \to
\l$ there exists $Y\in \mathcal T$ such that $Y\su f$.

Let 
\[\mathcal D=\bigl\{\bigcup\{X^i_\a: \lng i,\a\rng\in Y\}:
Y\in\mathcal  T\bigr\}.\]

Clearly, $\mathcal D\su [\l]^\theta$ and $|\mathcal D|=\l$.

Given any $Z\in [\l]^\theta$, define $f(i)=\min\{\a<\l: X^i_\a\su Z\}$
for $i<\cf\theta$. Now $\bigcup_{i<\cf\theta} X^i_{f(i)}\in
[Z]^\theta$. There is some $Y\in \mathcal T$ such that $Y\su f$. The
set $\bigcup\{X^i_\a: \lng i, \a\rng\in Y\}$ therefore  belongs to
$[Z]^\theta\cap \mathcal D$. 
\end{proof}

Now the asymptotic  equations  concerning  density follow: 

\begin{corollary}\label{densityalln}
For every infinite cardinal $\nu$ and $\rho\ge \beth_\om(\nu)$, for all
but finitely many $n<\om$ it holds that 
\begin{equation} \label{maindensityrelation}
\mathcal D(\rho,\beth_n(\nu))=\rho,
\end{equation}
and 
\begin{equation}\label{seconddensityrelation}  \mathcal D(\rho,\nu,\beth_\om(\nu))=\rho.
\end{equation}
\end{corollary}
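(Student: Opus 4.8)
The plan is to apply Lemma \ref{density} with the strong limit cardinal $\mu:=\beth_\om(\nu)$, which is strong limit because $\beth_{n+1}(\nu)=2^{\beth_n(\nu)}$ and $\beth_\om(\nu)=\sup_n\beth_n(\nu)$. Fix $\rho\ge\mu$ and let $\theta(\rho)<\mu$ be the threshold supplied by the Lemma. Since $\theta(\rho)<\beth_\om(\nu)=\sup_n\beth_n(\nu)$, there is a least $m<\om$ with $\theta(\rho)<\beth_m(\nu)$, and the whole argument will turn on showing that every $n>m$ is good.

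For \eqref{maindensityrelation} I would argue that each $n>m$ works. For such $n$ the cardinal $\theta:=\beth_n(\nu)$ lies in $(\theta(\rho),\mu)$, and by K\"onig's theorem $\cf\theta=\cf(2^{\beth_{n-1}(\nu)})>\beth_{n-1}(\nu)\ge\beth_m(\nu)>\theta(\rho)$. Thus $\theta<\mu$ and $\cf\theta>\theta(\rho)$, so Lemma \ref{density} gives $\mathcal D(\rho,\beth_n(\nu))=\rho$. As only the finitely many $n\le m$ are excluded, \eqref{maindensityrelation} holds for all but finitely many $n$.

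For \eqref{seconddensityrelation} I would get the upper bound by reducing to \eqref{maindensityrelation} via the monotonicity of Claim \ref{densitybasic}(2). Fixing any $n>m$ and using $\nu\le\beth_n(\nu)\le\beth_\om(\nu)\le\rho$, two applications of monotonicity yield
\[\mathcal D(\rho,\nu,\beth_\om(\nu))\le \mathcal D(\rho,\beth_n(\nu),\beth_\om(\nu))\le \mathcal D(\rho,\beth_n(\nu),\beth_n(\nu))=\mathcal D(\rho,\beth_n(\nu))=\rho.\]
The matching lower bound is the routine covering estimate: if some $\mathcal D\su[\rho]^\nu$ dense in $[\rho]^{\beth_\om(\nu)}$ had $|\mathcal D|<\rho$, then $W:=\bigcup\mathcal D$ would satisfy $|W|=\max(|\mathcal D|,\nu)<\rho$ (since $\nu<\beth_\om(\nu)\le\rho$), whence $|\rho\sm W|=\rho\ge\beth_\om(\nu)$ and any $Z\in[\rho\sm W]^{\beth_\om(\nu)}$ contains no member of $\mathcal D$, contradicting density. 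Hence $\mathcal D(\rho,\nu,\beth_\om(\nu))=\rho$.

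The only genuinely delicate point is verifying that $\cf(\beth_n(\nu))$ eventually exceeds the threshold $\theta(\rho)$ returned by Lemma \ref{density}; this is exactly where K\"onig's theorem on $\cf(2^\k)$ enters, and it is what forces the ``all but finitely many $n$'' formulation rather than a statement valid for every $n$. Everything else is bookkeeping with the monotonicity chain and the union-size estimate.
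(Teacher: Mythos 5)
Your proof is correct and follows essentially the same route as the paper: both apply Lemma \ref{density} with $\mu=\beth_\om(\nu)$, use K\"onig's theorem to see that $\cf\beth_n(\nu)$ eventually exceeds the threshold $\theta(\rho)$, and deduce \eqref{seconddensityrelation} from \eqref{maindensityrelation} via the monotonicity of Claim \ref{densitybasic}(2). The only difference is that you spell out the trivial lower bound $\mathcal D(\rho,\nu,\beth_\om(\nu))\ge\rho$ via the covering estimate, which the paper leaves implicit.
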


\begin{proof}For all $n$, $\cf\beth_{n+1}(\nu)>\beth_n(\nu)$, so
  $\cf\beth_n(\nu)$ converges to $\beth_\om(\nu)$, hence
  (\ref{maindensityrelation}) follows from Lemma \ref{density}. To
  prove (\ref{seconddensityrelation}) fix, for a given
  $\rho\ge\beth_\om(\nu)$, some $n$ such that $\mathcal
  D(\rho,\beth_n(\nu))=\rho$ and as $\nu\le
  \beth_n(\nu)<\beth_\om(\nu)$, by monotonicity $\mathcal
  D(\rho,\nu,\beth_\om(\nu))\le \mathcal D(\rho,\beth_n(\nu))$.
\end{proof}

The consistency of $\mathcal D(\rho,\nu,\beth_n(\n))>\rho$ for
$\beth_n(\nu)\le \rho<\beth_\om(\nu)$ would imply that
$\beth_\om(\nu)$ cannot be relaxed to $\beth_n(\nu)$ in
(\ref{seconddensityrelation}) above. See Section \ref{swh} below for a
discussion of this --- at the moment unknown --- consistency.

Shelah pointed out to me the next lemma, which will be used in the
comparison theorem in the last theorem of Section \ref{families}.

\begin{lemma}\label{continuity} 
  Suppose $\mu$ is an strong limit cardinal. Let $\chi$ be a sufficiently large
  regular cardinal and $\d < \chi$ is a limit ordinal.  Suppose a sequence
  $\lng M_i:i\le\d\rng$ of elementary submodels of
  $(H(\chi),\in,\dots)$ satisfies:
\begin{enumerate} 
\item $M_i\su
   M_{i+1}$ and $M_j=\bigcup_{i<j}M_i$ when $j\le\d$ is limit;
 \item $\lng M_j:j\le i\rng \in M_{i+1}$ for all $i<\d$;
\item   $\mu\su M_0$ and $|M_i|\su M_i$ for all $i\le \d$. 
\end{enumerate} 
Then there exists $\k(*)<\mu$ such that
 $M_\d\cap [M_\d]^{\k}$ is dense in $[M_\d]^{\k}$ for
 all $\k$ such that $\k(*)\le \cf\k\le \k<\mu$. 
\end{lemma}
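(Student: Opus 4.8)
The plan is to take $\l=|M_\d|$ and to apply Lemma \ref{density} to this \emph{single} cardinal, obtaining a threshold $\theta(\l)<\mu$ with $\mathcal D(\l,\k)=\l$ whenever $\theta(\l)<\cf\k\le\k<\mu$. I would then set
\[\k(*)=\max\bigl(\theta(\l)^+,\ (\cf\d)^+\bigr),\]
where the second term is included only when $\cf\d<\mu$. Since $\mu$ is a limit cardinal both terms are below $\mu$, so $\k(*)<\mu$. This choice is engineered so that the awkward case $\cf\k=\cf\d$ never occurs: if $\cf\d<\mu$ then $\cf\k\ge\k(*)>\cf\d$, while if $\cf\d\ge\mu$ then $\cf\k\le\k<\mu\le\cf\d$; in either case $\cf\k\ne\cf\d$. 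Fix from now on a $\k$ with $\k(*)\le\cf\k\le\k<\mu$.

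The heart of the argument is a reflection claim: for every $i<\d$ there is a family $\mathcal G_i\su M_\d\cap[M_i]^{\k}$ that is dense in $[M_i]^{\k}$. To prove it, note first that $M_i\in M_\d$, since by hypothesis (2) $\lng M_j:j\le i\rng\in M_{i+1}\su M_\d$; hence the cardinal $\l_i:=|M_i|$ belongs to $M_\d$ as well, and so does $\rho_i:=\mathcal D(\l_i,\k)$, being definable from $\l_i,\k\in M_\d$. By monotonicity of density in its first argument (Claim \ref{densitybasic}(2)) and Lemma \ref{density},
\[\rho_i=\mathcal D(\l_i,\k)\le\mathcal D(\l,\k)=\l.\]
Because $\mathcal D(\l_i,\k)=\rho_i$ holds in $H(\chi)$ with parameters $M_i,\k,\rho_i\in M_\d$, elementarity of $M_\d\prec(H(\chi),\in)$ yields some $\mathcal G_i\in M_\d$ with $\mathcal G_i\su[M_i]^{\k}$, $|\mathcal G_i|=\rho_i$, dense in $[M_i]^{\k}$. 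It remains to check that every member of $\mathcal G_i$ is itself in $M_\d$: fixing a surjection $g\colon\rho_i\to\mathcal G_i$ with $g\in M_\d$, and using $\rho_i\le\l$ together with $\l=|M_\d|\su M_\d$ (hypothesis (3)), every $\a<\rho_i$ lies in $M_\d$, so $g(\a)\in M_\d$ by elementarity; as $g$ is onto, $\mathcal G_i\su M_\d$.

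With the claim in hand I would assemble $\mathcal F:=\bigcup_{i<\d}\mathcal G_i\su M_\d\cap[M_\d]^{\k}$ and verify density directly. Given $X\in[M_\d]^{\k}$, the union $M_\d=\bigcup_{i<\d}M_i$ is continuous and increasing (hypothesis (1)) and $\cf\k\ne\cf\d$, so the cofinality computation used in the proof of Claim \ref{densitybasic}(3) produces $i<\d$ with $|X\cap M_i|=\k$. Then $X\cap M_i\in[M_i]^{\k}$, and the claim gives $Y\in\mathcal G_i$ with $Y\su X\cap M_i\su X$; this $Y$ is a $\k$-subset of $X$ lying in $M_\d$, which is exactly what density of $M_\d\cap[M_\d]^{\k}$ in $[M_\d]^{\k}$ requires.

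The step I expect to be the main obstacle is the reflection claim, specifically the subtlety that $M_\d$ does not in general know its own cardinality $\l$. The point that makes it go through is that one reflects the \emph{smaller} parameter $\l_i=|M_i|$, which genuinely belongs to $M_\d$, rather than $\l$ itself; monotonicity then caps the size $\rho_i$ of the reflected dense family by $\l$, and hypothesis (3), in the form $\l\su M_\d$, forces all of its $\le\l$ members back into $M_\d$. This is also what allows a single application of Lemma \ref{density}, at the top cardinal $\l$ alone, to serve all levels $i<\d$ at once, so that no level‑by‑level threshold ever has to be controlled.
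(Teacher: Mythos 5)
Your proposal is correct and takes essentially the same route as the paper's own proof: one application of Lemma \ref{density} at $\l=|M_\d|$ only, a choice of $\k(*)$ that excludes $\cf\k=\cf\d$, reflection of a dense subfamily of $[M_i]^{\k}$ of size $\mathcal D(\l_i,\k)\le\l$ (via monotonicity, Claim \ref{densitybasic}(2)), absorption of that family into $M_\d$ using $\l\su M_\d$, and the cofinality argument for density of the union. The only cosmetic difference is that you reflect inside $M_\d$ directly, whereas the paper obtains each dense family $\mathcal D^\k_i$ as an element of $M_{i+1}$; both versions rest on hypothesis (2) in the same way.
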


\begin{proof}
  Denote $\l=|M_\d|$ and for $i<\d$ denote $\l_i=|M_i|$.  Let
  $\k(*)<\mu$ be such that $\mathcal D(\l,\k)=\l$ for all $\k<\mu$
  with $\cf\k\ge\k(*)$. By increasing $\k(*)$ if necessary, we assume
  that $\cf \d\notin [\k(*),\mu)$. Given $i<\d$ and $\k$ such that
  $\k(*)\le \cf\k\le \k<\mu$, there exists a dense $\mathcal D^\k_i\su
  [M_{i}]^{\k}$ in $M_{i+1}$ of size $\mathcal D(\l_i,\k)$ by
  elementarity. As $\l_{i}\le \l$, it holds by \ref{densitybasic}(2)
  that $|\mathcal D^\k_i|\le \l$ for all $i<\d$. Since $\mathcal
  D^\k_i\in M_\d$ and $|\mathcal D^\k_i|\le \l\su M_\d$, it follows
  that $\mathcal D^\k_i\su M_\d$. Now $\mathcal D^\k:=\bigcup_{i<\d}
  \mathcal D^\k_i\su M_\d$ for all $\k<\mu$ with $\cf\k\ge\k(*)$ and
  has cardinality $\l$. As $\cf \d\not=\cf\k$ for such $\k$, the set
  $\mathcal D^\k\su M_\d$ is dense in $[M_\d]^{\k}$.
\end{proof}

\section{Filtrations  with respect to anti-monotone set functions}
\label{filtsec}

Every infinite subset of a structure with countably many finite-place
operations is contained in a subset of the same cardinality which is
closed under all operations by the downward Skolem-L\"owenheim
theorem. Also, the union of any increasing chain of closed sets is
closed. Consequently, every uncountable structure with countably many
finitary operations is \emph{filtrable}, that is, presentable as an
increasing and continuous union of substructures of smaller
cardinality. 

If infinitary operations are admitted, both facts above are no longer
true since the cardinality of the closure of a subset of cardinality
$\l$ under a $\k$-place operation depends on the valued of the
exponent $\l^\k$ which is undecidable an is periodically larger than
$\l$ --- e.g. for $\l$ with countable cofinality --- in every model of
set theory, including models of the GCH.

 The main
theorem of this section asserts that for anti-monotone set-functions
a version of the Skolem-L\"owenheim theorem and
filtrations to closed sets  exist without appealing to additional axioms. 

\begin{definition} 
\begin{enumerate}
\item 
A \emph{filtration} of
an infinite set  $V$ is a sequence of
  sets $\lng D_\a:\a<\k\rng$ for some cardinal $\k$ which satisfies: 
\begin{enumerate} 
\item $|D_\a|<|V|$. 
\item $\a<\beta<\k\imply D_\a\su D_\b$
\item If $\a<\k$ is limit then $D_\a=\bigcup_{\beta<\a}D_\beta$. 
\item $V=\bigcup_{\a<\k} D_\k$.
\end{enumerate}
Condition (c) in the definition is the \emph{continuity} of $\lng D_\a:\a<\k\rng$.
\item Suppose $V$ is an infinite set and $\mathcal C\su \mathcal
  P(V)$. A $\mathcal C$-filtration of $V$ is a filtration $\lng
  D_\a:\a<\k\rng$ such that $D_\a\in \mathcal C$ for all $\a<\k$. 
\item We say that $V$ is \emph{$\mathcal C$-filtrable}, for $\mathcal
  C\su \mathcal P(V)$, if there exists a $\mathcal
C$-filtration of $V$.
\end{enumerate}
\end{definition}

\begin{theorem} \label{Si}
Suppose  $\mathcal C\su \mathcal P(V)$,  
  $I\not=\emptyset$
  is  countable and
  $\rho <|V|$ a cardinal. 
  Suppose $\mathcal S_i\su \mathcal C$ for $i\in I$ and:
\begin{enumerate}
\item For every $i\in I$, the union of every chain of sets from $S_i$ belongs
  to $\mathcal C$. 
\item For every $\rho\le \theta<|V|$ there exists $i\in
  I$ such that for every set $A\in [V]^{\theta}$ there 
  is $D\in \mathcal S_i$ such that  $A\su D\in [V]^\theta$. 
\end{enumerate}
Then $V$ is $\mathcal C$-filtrable. 
\end{theorem}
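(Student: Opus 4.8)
The plan is to build the filtration by a transfinite recursion that alternately uses the density/covering hypotheses to control the sizes of the approximating sets and uses condition (1) to guarantee closure is preserved at limits. Let $\l=|V|$. I would first enumerate $V=\{v_\xi:\xi<\l\}$ and aim to produce a continuous $\su$-increasing chain $\lng D_\a:\a<\cf\l\rng$ (more generally indexed by a suitable cardinal $\k$) of members of $\mathcal C$, each of cardinality $<\l$, whose union is $V$. The governing idea is that hypothesis (2) lets me, for each target cardinal $\theta$ in $[\rho,\l)$, pick an index $i=i(\theta)\in I$ such that the family $\mathcal S_{i(\theta)}$ ``swallows'' every $\theta$-sized subset into a $\theta$-sized member of $\mathcal C$; hypothesis (1) then lets me close off at limit stages without leaving $\mathcal C$.

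The key steps, in order, are as follows. First I would dispose of the case $\cf\l\ge\rho$ and the case $\cf\l<\rho$ separately, since the natural length of the filtration is governed by $\cf\l$. Writing $\l=\sup_{\xi<\cf\l}\l_\xi$ with each $\l_\xi<\l$ and $\l_\xi\ge\rho$, I would build $D_\a$ so that $\{v_\xi:\xi<\l_\a\}\su D_\a$ and $|D_\a|=\l_\a$ (or a nearby cardinal $<\l$), ensuring in the end that $\bigcup_\a D_\a=V$. At a successor step, having $D_\a$ of some size $\theta<\l$, I would use condition (2) with that $\theta$ to obtain $i\in I$ and then enlarge the relevant approximating set to a $D\in\mathcal S_i\su\mathcal C$ of size $\theta$ containing $D_\a$ together with the next block of $v_\xi$'s. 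The countability of $I$ matters here: as $\theta$ ranges over the cofinally many sizes encountered along the recursion, only countably many indices $i$ are available, so I must arrange a bookkeeping that repeatedly returns to each needed $i$. At a limit step $\a$ I set $D_\a=\bigcup_{\b<\a}D_\b$; by condition (1), since the $D_\b$ were drawn from a single $\mathcal S_i$ (or can be rearranged into chains inside individual $\mathcal S_i$), this union lands back in $\mathcal C$, securing continuity together with membership in $\mathcal C$.

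The main obstacle I expect is reconciling the single-family closure hypothesis (1) with the fact that condition (2) may force me to switch indices $i\in I$ as the size $\theta$ varies along the recursion: condition (1) only promises that a chain \emph{within one} $\mathcal S_i$ has its union in $\mathcal C$, whereas a naive construction produces $D_\b$'s coming from different $\mathcal S_i$'s. The resolution I would pursue exploits that $I$ is countable: along a cofinal branch I can thin to a subsequence that is eventually constant in its choice of index, or else interleave the countably many indices so that, for each limit $\a$ of the right cofinality, the tail of $\lng D_\b:\b<\a\rng$ forms a chain inside one fixed $\mathcal S_i$ and hence has union in $\mathcal C$; the earlier initial segment is absorbed because it is itself a member of $\mathcal C$ of smaller size. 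Managing this interleaving so that continuity, the size bound $<\l$, and coverage of all of $V$ hold simultaneously is where the real care is needed, and it is where I would spend most of the effort verifying the bookkeeping.
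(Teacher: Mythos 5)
Your skeleton matches the paper's proof: build a continuous increasing chain $\lng D_\a:\a<\cf\l\rng$ (where $\l=|V|$) of sets of size $<\l$ covering $V$, use hypothesis (2) at successor stages and hypothesis (1) at limit stages, and handle the index-switching problem by pigeonhole on the countable set $I$; you have also correctly isolated that index-switching is the one real difficulty. But two of your details fail as written. First, your case split ($\cf\l\ge\rho$ versus $\cf\l<\rho$) is the wrong dichotomy; the relevant one, and the paper's, is $\cf\l>\aleph_0$ versus $\cf\l=\aleph_0$. When $\cf\l>\aleph_0$ is regular and uncountable, the assignment $\a\mapsto i(\theta_{\a+1})$ into the countable set $I$ has a fiber of size $\cf\l$, hence cofinal, and re-indexing along it makes the index literally constant --- this validates your ``thin to a constant subsequence.'' But when $\cf\l=\aleph_0$ no such subsequence need exist: the index demanded by (2) at each step is dictated by the size of the set you must swallow, so the indices along the $\om$-chain may be pairwise distinct, and ``interleaving'' is not something you are free to arrange. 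Your proposed resolutions are simply unavailable there, and your proof has a hole at exactly $\cf\l=\aleph_0$. The case is in fact trivial, for a reason you never state and which is the whole point of the paper's split: a filtration of length $\om$ has no limit stages, so hypothesis (1) is never invoked and the indices may vary freely.

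Second, your limit-stage verification is incorrect as stated. A \emph{tail} of $\lng D_\b:\b<\a\rng$ never lies inside a single $\mathcal S_i$, because it contains the limit-stage sets $D_\g=\bigcup_{\b<\g}D_\b$, which are only known to lie in $\mathcal C$, not in any $\mathcal S_i$. And your fallback --- that the earlier initial segment ``is absorbed because it is itself a member of $\mathcal C$ of smaller size'' --- has no justification: in this theorem $\mathcal C$ is an arbitrary subfamily of $\mathcal P(V)$ with no closure under unions, so you cannot combine pieces of $\mathcal C$ at all. The correct repair is to use cofinal subchains rather than tails: the successor-stage sets chosen from the fixed $\mathcal S_i$ form a chain \emph{in} $\mathcal S_i$ that is cofinal in $\lng D_\b:\b<\a\rng$, so its union already equals $D_\a$ and (1) applies to it directly, with nothing left to absorb. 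Note that this repair also rescues your interleaving variant and in fact yields a uniform argument needing no thinning and no case split: at any limit $\a$, since $I$ is countable, pigeonhole applied to a cofinal sequence of successor stages below $\a$ (of order type $\cf\a$, whether $\om$ or uncountable regular) produces a single $i\in I$ used cofinally often below $\a$, and the corresponding sets form a chain in $\mathcal S_i$ with union $D_\a$, so $D_\a\in\mathcal C$ by (1) no matter how the recursion was forced to switch indices.
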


\begin{proof}
  Let $\l:=|V|$.

  Assume first that $\cf \l>\aleph_0$.  Fix an increasing and
  continuous chain of sets $\lng A_\a:\a<\cf\l\rng$ such that
  $V=\bigcup_{\a<\cf\l} A_\a$ and $|A_\a|<\l$ for each $\a<\cf\l$ and
  $|A_0|\ge \rho$. 
  Denote
  $\theta_\a:=|A_\a|<\l$.

As $\cf\l>\aleph_0$
  is regular and $I$ is countable, 
  we may assume, by passing to a subsequence, that for some fixed
  $i\in I$  it holds that  for every $\a<\cf\l$ and every $B\in
  [V]^{\theta_{\a+1}}$ there is $D\in \mathcal S_i$ such that $B\su D$
  and $|D|=|B|$. 

  Define inductively $D_\a$ for $0<\a<\cf\l$. For limit $\a<\cf\l$ let
  $D_\a=\bigcup_{\beta<\a}D_\beta$. For 
  $\a=\beta+1$ choose 
  $D_\a\in \mathcal S_i$ which contains $A_\a\cup\bigcup_{\b<\a}
  D_\beta$ and is of cardinality $|A_\a\cup\bigcup_{\b<\a}
  D_\beta|$.

  $V=\bigcup_{\a<\cf\l}$ since $A_\a\su D_\a$ for all $\a<\cf\l$, 
  and as $D_{\a+1}\in \mathcal S_i$, each
  $D_{\a+1}$ belongs to $\mathcal C$. 
  By (1) $D_\a\in \mathcal C$ for
  limit $\a<\cf\l$ as well. Thus $\lng D_\a:0<\a<\cf\l\rng$ is a
  $\mathcal C$-filtration. 

If $\cf\l=\aleph_0$ fix an increasing union  $\l=\bigcup_{n<\om} A_n$
with $|A_n|<\l$ and  $|A_0|\ge \rho$ and let $D_{n+1}=K(A_{n+1}\cup D_n)$. By (2),
$|D_{n+1}|=|A_{n+1}|$ hence $\lng D_{n+1}:n<\om\rng$ is a $\mathcal
C$-filtration. 
\end{proof}

\begin{definition}
Let $V$ be a nonempty set. 
\begin{enumerate}
\item A \emph{notion of closure} over $V$ is a family of sets
  $\mathcal C\su \mathcal P(V)$ that satisfies: 
\begin{enumerate}
\item $\emptyset,V\in \mathcal C$. 
\item $D_1,D_2\in \mathcal C\imply D_1\cup D_2\in \mathcal C$
\item $\bigcap X\in \mathcal C$ for all $X\su \mathcal C$. 
\end{enumerate}
\item A function $K:\mathcal P(V)\to \mathcal P(V)$ 
is a \emph{closure operator} if it satisfies 
Kuratowski's {closure axioms}:
\begin{enumerate}
\item $K(\emptyset)=\emptyset$;
\item $A\su K(A)$ for $A\su V$;
\item $K(K(A))=K(A)$ for $A\su V$;
\item $K(A\cup B)=K(A)\cup K(B)$ for $A,B\su V$. 
\end{enumerate}
\item A \emph{notion of semi-closure} over $V$ is a family $\mathcal
  C\su \mathcal P(V)$ which satisfies $V\in \mathcal C$ and $\bigcap
  X\in \mathcal C$ for all $X\su \mathcal C$. 
\item  A function $K:\mathcal P(V)\to \mathcal P(V)$ 
is a \emph{semi-closure operator} if it satisfies Kuratowski's axioms
(b) and (c) and the following implication of (d):
\begin{enumerate}
\item[(d')] $A_1\su A_2\su X\imply K(A_1)\su K(A_2)$,
\end{enumerate}
\item Given a notion of semi-closure $\mathcal C\su \mathcal P(V)$ let
  $K_{\mathcal C}(A)=\bigcap \{ D: A\su D\in \mathcal C$. Conversely,
  given a semi-closure operator $K:\mathcal P(V)\to \mathcal P(V)$ let
  $\mathcal C_K=\{D: D=K(D)\su V\}$. 
\end{enumerate}
 \end{definition}

For every semi-closure operator $K$ on a set $V$ it holds that
$K=K_{\mathcal C_K}$ and for every notion of semi-closure $\mathcal C$
it holds that $\mathcal C=\mathcal C_{K_{\mathcal C}}$. If $K$ is a
closure operator then $\mathcal C_k$ is a notion of closure and if
$\mathcal C$ is a notion of closure then $K_{\mathcal C}$ is a closure
operator. Thus,  notions of [semi]-closure and 
[semi]-closure operators are
 interchangeable.

\begin{definition}
  Suppose that $F:\mathcal P(V)\to \mathcal P(V)$ and $\k$ is a
  cardinal. Denote by $F_\k$ the restriction $F\rest [X]^\k$ to sets
  of size $\k$.  The
  subsets which are closed under $F_\k$ form the following notion of
  semi-closure $\mathcal C_{F,\k}=\{D: D\su V \wedge (Y\in
  [D]^\k\imply F(Y)\su D)\}$.  Let $K_{F,\k}$ denote the semi-closure
  operator corresponding to $\mathcal C_{F,\k}$ and let us refer to
  sets $D\in \mathcal C_{F,\k}$ as \emph{$F_\k$-closed} sets.
\end{definition}

\begin{definition} A function $F:\mathcal P(X)\to \mathcal P(X)$ is
  \emph{anti-monotone} if $A\su B\su X \imply F(B)\su F(A)$.
\end{definition}
\begin{claim} \label{cofdif}
  Suppose $F:\mathcal P(V)\to \mathcal P(V)$ is anti-monotone and $\k$
  is an infinite cardinal. Then:
\begin{enumerate}
\item $K_{F,\k}$ is a closure operator.
\item An increasing union $\bigcup_{\a<\theta}D_\a$ of
  $K_{F,\k}$-closed sets is $K_{F,\k}$-closed if $\cf\theta\not=\cf \k$
\end{enumerate} 
\end{claim}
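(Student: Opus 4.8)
The plan is to handle the two items separately, invoking for (1) the interchangeability between notions of (semi-)closure and (semi-)closure operators recorded right after the definitions, and for (2) a cofinality-counting argument; in both items anti-monotonicity of $F$ is the tool that converts a $\k$-sized trace of a set into containment of the corresponding $F$-value.

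For (1), note that $K_{F,\k}$ is by construction the semi-closure operator attached to the notion of semi-closure $\mathcal C_{F,\k}$, so it already satisfies Kuratowski's axioms (b), (c) and the monotonicity (d$'$). Hence, by the interchangeability, it suffices to upgrade $\mathcal C_{F,\k}$ from a notion of semi-closure to a notion of closure, i.e. to verify $\emptyset\in\mathcal C_{F,\k}$ and closure of $\mathcal C_{F,\k}$ under binary unions. The first is vacuous, since $[\emptyset]^\k=\emptyset$ for infinite $\k$. For the second, let $D_1,D_2\in\mathcal C_{F,\k}$ and $Y\in[D_1\cup D_2]^\k$; as $\k$ is infinite, $|Y\cap D_j|=\k$ for some $j$, say $j=1$. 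Then $Y\cap D_1\in[D_1]^\k$ yields $F(Y\cap D_1)\su D_1$, and anti-monotonicity applied to $Y\cap D_1\su Y$ gives $F(Y)\su F(Y\cap D_1)\su D_1\su D_1\cup D_2$. Thus $D_1\cup D_2\in\mathcal C_{F,\k}$, so $\mathcal C_{F,\k}$ is a notion of closure and $K_{F,\k}=K_{\mathcal C_{F,\k}}$ is a closure operator.

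For (2), write $D=\bigcup_{\a<\theta}D_\a$ with each $D_\a$ being $F_\k$-closed and the chain $\su$-increasing; we may assume $\theta$ is a limit ordinal, the successor case being immediate as then $D$ equals its top term. Fix an arbitrary $Y\in[D]^\k$; the goal is $F(Y)\su D$. Put $Y_\a=Y\cap D_\a$, so $\lng Y_\a:\a<\theta\rng$ is $\su$-increasing with union $Y$ of size $\k$. The crux is the claim that, under $\cf\theta\neq\cf\k$, some trace $Y_\a$ already has size $\k$. Granting this, $Y_\a\in[D_\a]^\k$ gives $F(Y_\a)\su D_\a\su D$, and anti-monotonicity on $Y_\a\su Y$ gives $F(Y)\su F(Y_\a)\su D$, finishing the proof exactly as in (1).

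It remains to establish the trace claim, which is the main obstacle and the only place the cofinality hypothesis enters. First I would pass to a strictly increasing cofinal subsequence of length $\cf\theta$, so that without loss of generality $\theta=\cf\theta$ is an infinite regular cardinal, and then argue by contradiction assuming $|Y_\a|<\k$ for every $\a<\theta$. If $\theta>\k$, map each $y\in Y$ to the least index at which it appears; this set of $\le\k<\theta$ indices is bounded below the regular $\theta$ by some $\a^*$, forcing $Y\su D_{\a^*}$ and hence $|Y_{\a^*}|=\k$, a contradiction. Thus $\theta\le\k$, whence $\k=|Y|\le\sum_{\a<\theta}|Y_\a|=\theta\cdot\sup_\a|Y_\a|$. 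If $\sup_\a|Y_\a|<\k$, this and $\theta\le\k$ force $\theta=\k$, so $\k$ is regular and $\cf\k=\k=\cf\theta$; and if $\sup_\a|Y_\a|=\k$, then the non-decreasing sequence $\lng|Y_\a|:\a<\theta\rng$ is cofinal in $\k$ along the regular $\theta$, which again yields $\cf\k=\theta=\cf\theta$. Either way $\cf\theta=\cf\k$, contradicting the hypothesis, so the assumption fails and some $|Y_\a|=\k$, as required.
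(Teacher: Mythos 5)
Your proof is correct and takes essentially the same route as the paper's: for (1), the Kuratowski union axiom reduces to the infinite-pigeonhole split $|Y\cap D_1|=\k$ or $|Y\cap D_2|=\k$ followed by anti-monotonicity, and for (2), one locates an index $\a$ with $|Y\cap D_\a|=\k$ and applies closedness of $D_\a$ plus anti-monotonicity. The only difference is one of detail: you spell out the verification of $K(\emptyset)=\emptyset$ and give a full cardinality/cofinality argument for the trace claim, which the paper (here and in the analogous Claim \ref{densitybasic}(3)) simply asserts from the hypothesis $\cf\theta\not=\cf\k$.
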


\begin{proof}
We verify
  axiom (4). If $X\in [A\cup B]^\k$ then, since $\k$ is infinite, $|X\cap
  A|=\k$ or $|X\cap B|=\k$, so by anti-monotonicity $F(X)\su K(A)$ or
  $F(X)\su K(B)$.

Assume now that $D=\bigcup_{\a<\theta} D_i$ is an increasing union of
$K_{F,\k}$-closed sets and $\cf\theta\not=\cf\k$. Let $X\in [D]^\k$.
There exists some $\a<\theta$ such that $X\cap D_\a\in [D_\a]^\k$.
<Since $D_\a$ is $K_{F,\k}$-closed, $F(X\cap D_\a)\su D_\a$ and by
anti-monotonicity $F(X)\su F(X\cap D_\a)$. 
\end{proof}

\begin{lemma}\label{stabdense} Suppose $V$ is a set and  $F:\mathcal
  P(V) \to \mathcal P(V)$ is anti-monotone. Suppose $\k_1\le \k_2<\rho$
  are cardinals and $|F(X)|\le \rho$ for all $X\in [V]^{\k_1}$.  Let
  $K=K_{F,\k_2}$. If $\theta\ge\rho$ satisfies $\mathcal
  D(\theta,\k_1,\k_2)=\theta$ then $|K(A)|=\theta$ for all $A\in
  [V]^\theta$.
\end{lemma}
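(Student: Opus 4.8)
The plan is to build $K(A)$ explicitly as the union of a $\subseteq$-increasing, continuous chain $\langle A_\xi : \xi < \kappa_2^+\rangle$ of subsets of $V$, each of cardinality $\theta$, and to exploit anti-monotonicity to replace the (potentially $\theta^{\kappa_2}$ many) $\kappa_2$-subsets one would naively have to close off by a dense family of only $\theta$ many $\kappa_1$-subsets. Recall from Claim \ref{cofdif}(1) that, $F$ being anti-monotone, $K=K_{F,\kappa_2}$ is a genuine closure operator, so $K(A)$ is the least $F_{\kappa_2}$-closed set containing $A$; since $A\subseteq K(A)$ the lower bound $|K(A)|\ge\theta$ is immediate, and the whole content is the upper bound $|K(A)|\le\theta$.

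First I would fix, for every $W\subseteq V$ with $|W|=\theta$, a family $\mathcal D_W\subseteq [W]^{\kappa_1}$ that is dense in $[W]^{\kappa_2}$ and has cardinality $\mathcal D(\theta,\kappa_1,\kappa_2)=\theta$; such a family exists because density depends only on cardinality (transport a witness for $\mathcal D(\theta,\kappa_1,\kappa_2)=\theta$ along any bijection $W\to\theta$, which preserves inclusion). Set $A_0=A$, take unions at limits, and at successors put
\[A_{\xi+1}=A_\xi\cup\bigcup\{F(Y):Y\in\mathcal D_{A_\xi}\}.\]
Since $|\mathcal D_{A_\xi}|\le\theta$ and $|F(Y)|\le\rho\le\theta$ for each $Y\in[V]^{\kappa_1}$, an easy induction gives $|A_\xi|=\theta$ for every $\xi<\kappa_2^+$; and because $\kappa_2<\rho\le\theta$ forces $\kappa_2^+\le\theta$, the union $D:=\bigcup_{\xi<\kappa_2^+}A_\xi$ again has cardinality $\theta$.

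It remains to check that $D$ is $F_{\kappa_2}$-closed, for then $K(A)\subseteq D$ and hence $|K(A)|\le\theta$, which with the lower bound yields $|K(A)|=\theta$. Let $X\in[D]^{\kappa_2}$. As $\operatorname{cf}(\kappa_2^+)=\kappa_2^+>\kappa_2=|X|$ and the chain is increasing and continuous, $X\subseteq A_\eta$ for some $\eta<\kappa_2^+$. By density of $\mathcal D_{A_\eta}$ in $[A_\eta]^{\kappa_2}$ there is $Y\in\mathcal D_{A_\eta}$ with $Y\subseteq X$, whence anti-monotonicity gives $F(X)\subseteq F(Y)$, while $F(Y)\subseteq A_{\eta+1}\subseteq D$ by construction. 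Thus $F(X)\subseteq D$, as needed.

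The one genuinely load-bearing step --- and the place where the density equation is indispensable --- is the passage from $\kappa_2$-subsets to $\kappa_1$-subsets: without anti-monotonicity one would be forced to apply $F$ to all $\theta^{\kappa_2}$ members of $[A_\xi]^{\kappa_2}$, blowing the cardinality past $\theta$, whereas anti-monotonicity lets a single small subset $Y\subseteq X$ dominate $F(X)$, so that $\mathcal D(\theta,\kappa_1,\kappa_2)=\theta$ caps the number of values of $F$ that must actually be adjoined at each stage. The remaining work is the routine cofinality bookkeeping ensuring closure at the top of a chain of regular length $\kappa_2^+>\kappa_2$.
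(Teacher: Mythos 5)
Your proof is correct and follows essentially the same route as the paper's: a transfinite chain $\langle A_\zeta:\zeta\le\kappa_2^+\rangle$ built by adjoining $\bigcup\{F(Y):Y\in\mathcal D\}$ for a dense $\mathcal D\subseteq[A_\zeta]^{\kappa_1}$ of size $\theta$ at each successor, with regularity of $\kappa_2^+$ and anti-monotonicity giving closure of the union. Your write-up is in fact slightly more explicit than the paper's on two points it leaves implicit --- the lower bound $|K(A)|\ge\theta$ and the minimality of $K(A)$ giving $K(A)\subseteq\bigcup_\zeta A_\zeta$ --- so there is nothing to correct.
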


\begin{proof}
  Given $A\in [V]^\theta$ put $A_0=A$ and for $\zeta\le\k_2^+$ define
  $A_\zeta\in [V]^\theta$ inductively. For limit $\zeta\le \k_2^+$ let
  $A_\zeta=\bigcup_{\xi<\zeta} A_\xi$, which is of cardinality
  $\theta$. To define $A_{\zeta+1}$ for $\zeta<\k_2^+$ fix, by the
  assumption $\mathcal D(\theta,\k_1,\k_2)=\theta$, a set $\mathcal
  D_\zeta\su [A_\zeta]^{\k_1}$ of cardinality $\theta$ with the
  property that for all $Y\in [A_\zeta]^{\k_2}$ there exists $X\in
  \mathcal D_\zeta$ such that $X\su Y$ and let
  $A_{\zeta+1}=A_\zeta\cup \bigcup\{F(X): X\in \mathcal D_\zeta\}$, which is
  of cardinality $\theta$ since $\rho\le \theta$.

For every  $X\in [A_{\k_2^+}]^{\k_2}$ there is some $\zeta<\k_2^+$
such that $Y\in [A_\zeta]^{\k_2}$ and hence there is some $X\in
\mathcal D_\zeta$ such that $X\su Y$. As $F(Y)\su F(X)\su
A_{\zeta+1}$, it holds that $F(Y)\su A_{\k_2^+}$, so $A_{\k_2^+}$ is
$K$-closed.
\end{proof} 



\begin{theorem}[Asymptotic filtrations for anti-monotone set functions]\label{filtrable} 
  Let $\nu$ be an infinite cardinal and denote $\mu:=\beth_\om(\nu)$.
  Let $V$ be a set of cardinality $|V|>\mu$ and suppose $F:\mathcal
  P(V)\to \mathcal P(V)$ is an anti-monotone function.  If there
  exists a cardinal $\rho\ge\mu$ such that $\rho<|V|$ and $|F(Y)|\le
  \rho$ for every $Y\in [X]^\nu$ then $V$ is $K_{F,\mu}$-filtrable.
\end{theorem}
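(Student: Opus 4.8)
The plan is to apply Theorem \ref{Si} with the notion of semi-closure $\mathcal C:=\mathcal C_{F,\mu}$ of $F_\mu$-closed sets, the countable index set $I=\om$, and, for each $n<\om$, the family $\mathcal S_n:=\mathcal C_{F,\beth_n(\nu)}$ of $F_{\beth_n(\nu)}$-closed sets; the conclusion of Theorem \ref{Si} is then precisely that $V$ is $\mathcal C_{F,\mu}$-filtrable, i.e.\ $K_{F,\mu}$-filtrable. Two preliminary observations make the hypotheses of Theorem \ref{Si} accessible. First, by anti-monotonicity the bound $|F(Y)|\le\rho$ propagates from $[V]^\nu$ to every $Y$ with $|Y|\ge\nu$: any $Y'\in[Y]^\nu$ satisfies $F(Y)\su F(Y')$, so $|F(Y)|\le\rho$; in particular $|F(X)|\le\rho$ for all $X\in[V]^{\beth_n(\nu)}$. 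Second, and crucially, every $F_{\beth_n(\nu)}$-closed set is automatically $F_\mu$-closed: if $D\in\mathcal S_n$ and $Y\in[D]^\mu$, pick any $Y'\in[Y]^{\beth_n(\nu)}$; then $Y'\su D$ forces $F(Y')\su D$, while $Y'\su Y$ gives $F(Y)\su F(Y')\su D$ by anti-monotonicity. Hence $\mathcal S_n\su\mathcal C$ for every $n<\om$, as required.

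Next I would verify condition (2) of Theorem \ref{Si}. Fix a cardinal $\theta$ with $\rho\le\theta<|V|$. Since $\theta\ge\rho\ge\mu=\beth_\om(\nu)$, Corollary \ref{densityalln} supplies some $n<\om$ with $\mathcal D(\theta,\beth_n(\nu))=\theta$. Because $\beth_n(\nu)<\beth_\om(\nu)\le\rho$, Lemma \ref{stabdense} applies with $\k_1=\k_2=\beth_n(\nu)$ (its size hypothesis holds by the first observation above) and yields $|K_{F,\beth_n(\nu)}(A)|=\theta$ for every $A\in[V]^\theta$. The set $D:=K_{F,\beth_n(\nu)}(A)$ lies in $\mathcal S_n$, contains $A$, and has cardinality $\theta$, so $D\in[V]^\theta$; this is exactly what condition (2) demands, with the index $n$ allowed to depend on $\theta$.

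The main work, and the step I expect to be the real obstacle, is condition (1): that the union $D=\bigcup_t D_t$ of an arbitrary chain of $F_{\beth_n(\nu)}$-closed sets belongs to $\mathcal C=\mathcal C_{F,\mu}$. Replacing the chain by a cofinal well-ordered subchain, write $D=\bigcup_{\a<\lambda}D_\a$ strictly increasing with $\lambda$ regular (if the chain has a top element there is nothing to prove). Given $Y\in[D]^\mu$ I claim there is a single $\a<\lambda$ with $|Y\cap D_\a|\ge\beth_n(\nu)$; granting this, choosing $Y'\in[Y\cap D_\a]^{\beth_n(\nu)}$ gives $F(Y)\su F(Y')\su D_\a\su D$ by $F_{\beth_n(\nu)}$-closedness of $D_\a$ together with anti-monotonicity, so $D$ is $F_\mu$-closed. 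To prove the claim, suppose toward a contradiction that $|Y\cap D_\a|<\beth_n(\nu)$ for all $\a$. If $\lambda<\mu$ then $|Y|\le\lambda\cdot\beth_n(\nu)=\max(\lambda,\beth_n(\nu))<\mu$, contradicting $|Y|=\mu$; and if $\lambda>\mu$ then, $\lambda$ being regular and strictly larger than $|Y|=\mu$, the map sending $y\in Y$ to the least $\a$ with $y\in D_\a$ is bounded below $\lambda$, so $Y\su D_{\a^*}$ for some $\a^*$ and $|Y\cap D_{\a^*}|=\mu\ge\beth_n(\nu)$. The remaining possibility $\lambda=\mu$ cannot occur, since $\lambda$ is regular while $\mu=\beth_\om(\nu)$ is singular of cofinality $\om$. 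This dichotomy --- driven entirely by the singularity of $\mu$ --- is exactly where the hypothesis $\mu=\beth_\om(\nu)$ enters, and establishing it is the crux of the argument.

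With conditions (1) and (2) verified and $\rho<|V|$ given by hypothesis, Theorem \ref{Si} produces a $\mathcal C_{F,\mu}$-filtration of $V$, which is the assertion that $V$ is $K_{F,\mu}$-filtrable.
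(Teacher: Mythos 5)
Your proposal is correct, and it follows the paper's skeleton exactly: the same application of Theorem \ref{Si} with $\mathcal C=\mathcal C_{F,\mu}$, $I=\om$, and $\mathcal S_n=\mathcal C_{F,\beth_n(\nu)}$, and the same verification of condition (2) via the density equations (the paper quotes Lemma \ref{density} where you quote its Corollary \ref{densityalln}) combined with Lemma \ref{stabdense}. The one place you genuinely diverge is condition (1). The paper disposes of it in one line by citing Claim \ref{cofdif}(2): a chain from $\mathcal S_n$ is also a chain from $\mathcal S_{n+1}$, and since $\cf\beth_{n+1}(\nu)>\beth_n(\nu)\ge\cf\beth_n(\nu)$ the cofinality of the chain cannot agree with both $\cf\beth_n(\nu)$ and $\cf\beth_{n+1}(\nu)$, so the union lands in $\mathcal S_n$ or in $\mathcal S_{n+1}$, hence in $\mathcal C$. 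You instead prove directly that the union is $F_\mu$-closed, via the trichotomy on the regular cofinal length $\lambda$ of the chain: $\lambda<\mu$ is ruled out by a cardinality count, $\lambda=\mu$ by the singularity of $\mu$, and $\lambda>\mu$ forces $Y$ into a single $D_\a$ by regularity. Both are correct cofinality arguments. Yours is self-contained, makes the role of the singularity of $\beth_\om(\nu)$ explicit, and needs only the weaker conclusion that the union is $F_\mu$-closed; the paper's is shorter given that Claim \ref{cofdif} is already available, and yields the formally stronger conclusion that the union belongs to $\mathcal S_{n+1}$, i.e.\ is $F_{\beth_{n+1}(\nu)}$-closed.
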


\begin{proof} Let $\mathcal C$ be the family of all $K_{F,\mu}$-closed
  subsets of $V$. Let $\k_n$ denote $\beth_n(\nu)$ and let
  $K_n=K_{F,\k_n}$.   
Let $\mathcal S_n$ be the collection of all $K_n$-closed
subsets of $V$. We check that conditions (1) and (2) in Theorem
\ref{Si} are
  satisfied for $\mathcal C$ and $\{\mathcal S_n: n<\om\}$.

  As $\k_n<\mu$ and $F$ is anti-monotone,  it holds that  $\mathcal S_n\su \mathcal S_{n+1}\su
  \mathcal C$ for all $n$. The union of any chain of sets
  from $\mathcal
  S_n$ belongs to $\mathcal S_{n+1}\su \mathcal C$ by Lemma
  \ref{cofdif} (2).

  To verify condition (2) let $\theta\ge \rho$ be given. By Lemma
  \ref{density}, there exists $m(\theta)<\om$ such
  that $D(\theta,\k_n)=\theta$ for all $n\ge m(\theta)$. By Lemma \ref{stabdense},
  $|K_n(A)|=\theta$ for $A\in [V]^\theta$ for all $n\ge
  m(\theta)$.

Now 
  $V$ is $K_{F,\nu}$-filtrable by Theorem \ref{Si}. 
\end{proof}

We remark that each $\mathcal S_n$ is in fact the collection of closed
sets with respect to a closure operator    and that
$m<n\imply \mathcal S_m\su \mathcal S_n$. Neither of these two
properties was used in proving Theorem \ref{filtrable}.

The following lemma relates filtrations to elementary chains of models:

\begin{lemma}Suppose $F$ is anti-monotone, $|F(X)|\le \rho$ for all $X\in [V]^\nu$
and $\rho\ge \mu=\beth_\om(\nu)$. Then for every chain $\lng
M_i:i<\l\rng$ of elementary submodels of a sufficiently large
$(H(\chi),\in)$ which satisfies $\mu\su M_0$, $|M_i|\su M_i|$,  $V,F\in M_0$  and $\lng
M_j:j\le i\rng\in  M_{i+1}$ the set
$D_i:=M_i\cap V$ is $F_\mu$ closed for every limit $i<\l$.
If $|M_i|<\l:=|V|$ then\
$\lng D_i: i<\l \text{ is limit}\rng$ is a $\mu$-filtration of $V$.
\end{lemma}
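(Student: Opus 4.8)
The plan is to verify the two assertions directly from the definitions, using Lemma \ref{density} to produce the cofinally many levels at which the sets $D_i$ become $F_\mu$-closed, together with the elementarity of the models $M_i$. First I would fix the setup: let $\chi$ be large enough that $V,F\in H(\chi)$ and that the conclusion of Lemma \ref{density} holds, so there is $\theta(\l)<\mu$ with $\mathcal D(\l,\theta)=\l$ for all $\theta<\mu$ of cofinality at least $\theta(\l)$. Since $\mu=\beth_\om(\nu)$ is a strong limit and the $\beth_n(\nu)$ are cofinal in $\mu$ with $\cf\beth_{n+1}(\nu)>\beth_n(\nu)$, I can pick an $n$ large enough that $\mathcal D(\l,\beth_n(\nu))=\l$; this is exactly the reasoning already carried out in Corollary \ref{densityalln}.

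The core claim is that each $D_i=M_i\cap V$ is $F_\mu$-closed for limit $i$. Recall $F_\mu$-closed means: whenever $Y\in[D_i]^\mu$ then $F(Y)\su D_i$. The obstacle here is that $|Y|=\mu$ is the infinitary size $\beth_\om(\nu)$, so $Y$ need not lie in $M_i$ as an element even though $Y\su D_i$; I cannot simply invoke $Y\in M_i$ and elementarity. The standard remedy, and the reason the hypotheses $V,F\in M_0$ and $\mu\su M_0$ and $\lng M_j:j\le i\rng\in M_{i+1}$ are imposed, is to pass through the density sets witnessing $\mathcal D(\l,\beth_n(\nu))=\l$. Concretely, I would run the argument of Lemma \ref{continuity}: for a suitable $n$ with $\beth_n(\nu)\le\mu$ there is, by elementarity, a set $\mathcal D^{(n)}\su [M_i]^{\beth_n(\nu)}$ lying inside $M_i$ (assembled from the dense sets in each $M_{j+1}$ for $j<i$) which is dense in $[D_i]^{\beth_n(\nu)}$, and each member of it, being of size $<\mu$ and a subset of $|M_i|\su M_i$, is actually an element of $M_i$. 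Then given $Y\in[D_i]^\mu$, choose $X\in\mathcal D^{(n)}$ with $X\su Y$ and $|X|=\beth_n(\nu)$; since $X\in M_i$ and $V,F\in M_i$, elementarity gives $F(X)\su M_i$, and because $|F(X)|\le\rho$ with $\rho\su M_i$ we get $F(X)\su M_i\cap V=D_i$. Anti-monotonicity of $F$ then yields $F(Y)\su F(X)\su D_i$, which is precisely $F_\mu$-closedness. The limitness of $i$ is what guarantees, via the continuity clause of Claim \ref{densitybasic}(3) (using $\cf i\ne\cf\beth_n(\nu)$, arranged by the choice of $n$), that $\mathcal D^{(n)}$ is dense at the level $\beth_n(\nu)$ and hence at level $\mu$.

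For the second assertion, suppose $|M_i|<\l$ for all $i$. Then the sequence $\lng D_i:i<\l,\ i\text{ limit}\rng$ is $\su$-increasing by monotonicity of the chain $\lng M_i\rng$, each $D_i$ has cardinality $|M_i|<\l$, and $\bigcup_i D_i=(\bigcup_i M_i)\cap V=V$ because every element of $V$, being an element of $H(\chi)$ appearing in the union of the chain, lands in some $M_i$. Continuity at limits holds because at a limit stage $j$ the model $M_j=\bigcup_{i<j}M_i$, so $D_j=\bigcup_{i<j}D_i$. Thus all four clauses of the definition of a $\mu$-filtration are met, and combined with the first part showing each $D_i\in\mathcal C$ (the family of $K_{F,\mu}$-closed sets), this is a $\mu$-filtration. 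I expect the main difficulty to be exactly the infinitary-size issue in the first paragraph of the core claim—making precise that the witnessing density set $\mathcal D^{(n)}$ can be taken inside $M_i$ with all its members being genuine elements of $M_i$—since this is where the interplay between the revised-GCH density equation, elementarity, and the closure hypotheses $\mu\su M_0$ and $|M_i|\su M_i$ all has to be deployed simultaneously, essentially reproducing the mechanism of Lemma \ref{continuity}.
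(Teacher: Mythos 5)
Your proposal is correct and takes essentially the same approach as the paper: the paper's proof likewise reduces $F_\mu$-closedness of $D_i=M_i\cap V$ to the density of $M_i\cap[V]^\k$ in $[V]^\k$ for a suitable $\k<\mu$ supplied by Lemma \ref{continuity} (which you unfold rather than cite as a black box), and then concludes via $F(X)\in M_i$, $|F(X)|\le\rho$, and anti-monotonicity, verifying the filtration clauses exactly as you do. The only point worth flagging is that both your argument and the paper's tacitly use $\rho\su M_i$ to get $F(X)\su M_i$, while the stated hypothesis gives only $\mu\su M_0$ --- an imprecision inherited from the statement itself, not a divergence in method.
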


\begin{proof}
If $M\prec (H(\chi),\in,\dots)$ and $V,F\in M$, $\rho\su M$, then for every $X\in
M\cap [V]^\k$ for $\nu\le \k<\nu$ the set $F(X)$
belongs to $M$ and $|F(X)|\le \rho$, hence $F(X)\su M$. Thus, if $M\cap
[V]^\k$ is dense in $[V]^\k$, the set $V\cap M$ is $\k$-closed. By \ref{continuity}
this is the case for $M_i$ for all limit $i<\l$. 

If indeed $|M_i|<\l$ for all $i$ the $|D_i|<\l$. The fact that
$\bigcup_{i<\l}D_i=V$ follows from $V\in M_0$ and $\l\su\bigcup_{i<\l} M_i$.
\end{proof}

\subsection{The need for $\rho\ge\beth_\om(\nu)$}\label{swh}
Is the restriction $\rho\ge\beth_\om(\nu)$ optimal in Theorem
\ref{filtrable}?  This is not known at the moment. As a result of a
discussion \cite{personal} with Shelah, something can
be said about it: improving it will be at least as hard as proving one
of the following versions of Shelah's Weak Hypothesis, indexed by $n>0$:
\begin{enumerate}
\item[(SWH$_n$)] There are  no infinite $\nu$ and $\rho$ 
such that $\beth_n(\nu)<\rho<\beth_\om(\nu)$ and $\mathcal
F=\{A_\a:\a<\rho^+\}\su [\rho]^\rho$ satisfies $|A_\a\cap A_\b|<\nu$
for $\a<\b<\rho^+$.
\end{enumerate}

Shelah's Weak Hypothesis is a dynamic statement whose evolving
contents is the weakest unproved pcf-theoretic statement. It is
formulated in terms of patterns in Shelah's function pp.  See
\cite{sh:410} for the translation of SWH from pcf to almost disjoint
families. The negations of first two versions of the SWH have been
shown consistent. Gitik \cite{gm} showed the consistency of an
$\omega$ sequence of cardinals with pp of each exceeding the limit of
the sequence. SWH was then revised to the statement that there was no
such $\om_1$ sequence. Recently, Gitik \cite{gitikse} proved the
consistency of the existence of an $\om_1$ sequence of cardinals with
the pp of each exceeding the limit of the sequence.

Shelah's believes that the negations of the versions above
will eventually be proved consistent \cite{personal}.

Suppose that $\beth_n(\nu)<\rho<\beth_\om(\nu)$ and $\mathcal
F=\{A_\a:\a<\rho^+\}\su [\rho]^\rho$ satisfies $|A_\a\cap A_\b|<\nu$
for $\a<\b<\rho^+$. Let $V=\rho\dot\cup \mathcal F$ and let
$F(X)=\{\a:X\cap \rho\su A_\a\}\cup \bigcap _{A\in X\cap \mathcal F}
A$. This is an anti-monotone function and for every $X$ with $|X|\ge
\nu^+$ it holds that $|F(X)|<\nu$. Yet, there is no filtration of $V$
to $F_\nu$-closed sets, as $\rho$ would have to be contained in one of
the parts, and every $F_\nu$-closed set which contains $\rho$ is equal
to $V$.

Thus, a ZFC proof of Theorem \ref{filtrable} with $\beth_n(\nu)$ in
place of $\beth_\om(\nu)$ for some $n>0$ will imply the $n$-th version
of the Shelah Weak Hypothesis. If, however, the negations of (SWH$_n$)
are consistent for all $n>0$, then $\beth_\om$ is optimal.

\section{Splitting families of sets}\label{families}
Following \cite{eh} let us say that a family $\mathcal F$ of sets
satisfies condition $C(\l,\k)$ if the intersection of every subfamily
of $\mathcal F$ of size $\l$ is of size strictly less than $\nu$.
Miller \cite{miller} define \emph{property B} (after Felix Bernstein's
``Bernstein set'') of a family of sets $\mathcal F$ as: there exists a
set $B$ such that $B\cap A\not =\emptyset$ and $A\not\su B$ for all
$A\in \mathcal F$ and proved that for every infinite $\rho$, every
$\rho$-uniform family of sets that satisfies $C(\rho^+,n)$ for some
natural number $n$ satisfies property B. Erd\H os and Hajnal \cite{eh}
used Miller's method and a theorem of Tarski \cite{tarski} to prove
from the assumption GCH that every $\rho$-uniform family that
satisfies $C(\rho^+,\nu)$ for an infinite cardinal $\nu$ satisfies
property B if $\rho>\nu^+$.

More results with the GCH followed along this line
\cite{komclose,komcomp,hjs,hjss}.  Komj\'ath \cite{komclose} proved from
the GCH that for $\nu^+<\rho$ every $\rho$-uniform $\mathcal F$ which
satisfies $C(\rho^+,\nu)$ and is also almost disjoint is
\emph{essentially disjoint}, that is, it can be made pairwise disjoint
by removing a set of size $<\rho$ from each member of $\mathcal F$.

Komj\'ath \cite{komcomp} investigated further the property of essential
disjointness which he introduced in \cite{komclose} (under the name
``sparseness'') and proved that the array of cardinalities of pairwise
intersections in an $\aleph_0$-uniform a.d family determines whether
the family is essentially disjoint or not.

Hajnal, Juh\'asz and Shelah proved in \cite{hjs} a general theorem
which implied many of the Miller-type theorems known at the time and
derived also new combinatorial and topological consequences with it by
assuming as an additional axiom a relaxation of the GCH to a weak
version of the Singular Cardinal Hypothesis.

\begin{definition}
Let $\mathcal F$ be a family of sets. 
\begin{enumerate}
\item The \emph{universe of $\mathcal F$}  is the set $\bigcup F$,
and is denoted by $V(\mathcal F)$.
\item Given $U\su V(\mathcal F)$ let $\mathcal F(U)=\{\mathcal F\cap
  \mathcal P(U)\}=\{A:A\in \mathcal F \wedge A\su U\}$. 
\item $\mathcal F$ is $\rho$-uniform, for a cardinal $\rho$,  if $|A|=\rho$
for all $A\in \mathcal F$. 
\end{enumerate}
\end{definition}

\begin{definition}Suppose that $\mathcal F$ is a family of sets and
  $V=V(\mathcal F)$. 
\begin{enumerate}
\item A set $U\su V$ is $\k$-closed,  for a cardinal $\k$, if 
  $|A\cap U|\ge \k\imply A\su U$ for all $A\in \mathcal F$.
\item Let $F:\mathcal P(V)\to \mathcal P(V)$ be defined
  by 
\[F(X)=\bigcup\{A: X\su A\in \mathcal F\}.\]
\end{enumerate}
\end{definition}

\begin{claim} For every family $\mathcal F$ with $V=V(\mathcal F)$ and
  cardinal $\k$,
\begin{enumerate}
\item The collection $\mathcal C_\k$ of all
  $\k$-closed sets is a notion of semi-closure over $V$ and if $\k$ is
  infinite then $\mathcal D_\k$ is a notion of closure.
\item If $U\su V$ is $\k$-closed and $A\in \mathcal F$ is not
  contained in $U$ then $|A\cap U|<\k$.
\item The function $F$ defined above  is anti-monotone. 
\item For every $U\su V$, $U$  is $\k$-closed iff $U$ is
  $K_{F,\k}$-closed.
\item If $\k_1\le \k_2$ are cardinals then $\mathcal C_{\k_1}\su
  \mathcal C_{\k_2}$.
\item If $\mathcal F$ is $\rho$-uniform and $\k\le \rho$ then a set $U\su V$
  is $\k$-closed iff $\mathcal F(U)=\{A:A\in \mathcal F \wedge |A\cap
  U|\ge \k\}$.
\item Abusing notation, we say that a subfamily $\mathcal F'\su \mathcal F$ is
  $\k$-closed if $V(\mathcal F')$ is $\k$-closed and we denote by
  $K_{F,\k}(\mathcal F)$ the family $\mathcal F(K_{F,\k}(V(\mathcal
  F)))$. 
\end{enumerate}
\end{claim}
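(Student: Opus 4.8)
The plan is to verify the seven items by directly unwinding the definitions of $\k$-closedness, of the function $F$, and of the operator $K_{F,\k}$; most of the work is bookkeeping, and the only two points that require genuine care are the role of the infiniteness of $\k$ in (1) and the passage between the two notions of closure in (4). I would begin with (3), since it is the cleanest and since (4) depends on it: if $X\su Y$ then every $A\in\mathcal F$ with $Y\su A$ also satisfies $X\su A$, so the union defining $F(Y)$ is taken over a subfamily of the one defining $F(X)$, giving $F(Y)\su F(X)$. Item (2) is then immediate as the contrapositive of the definition of $\k$-closedness, and (5) follows because $|A\cap U|\ge\k_2$ entails $|A\cap U|\ge\k_1$ whenever $\k_1\le\k_2$, so a $\k_1$-closed set is a fortiori $\k_2$-closed.

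For (1) I would first establish the semi-closure properties, valid for every $\k$: the set $V=V(\mathcal F)$ is $\k$-closed trivially (every $A\in\mathcal F$ already satisfies $A\su V$), and if $X\su\mathcal C_\k$ and $U=\bigcap X$, then $|A\cap U|\ge\k$ forces $|A\cap D|\ge\k$ for each $D\in X$ (as $U\su D$), hence $A\su D$ for each such $D$ and thus $A\su U$. To upgrade to a notion of closure when $\k$ is infinite I would add $\emptyset\in\mathcal C_\k$ (vacuous, since $|A\cap\emptyset|=0<\k$) and closure under binary unions: for $D_1,D_2\in\mathcal C_\k$ and $A\in\mathcal F$ with $|A\cap(D_1\cup D_2)|\ge\k$, the decomposition $A\cap(D_1\cup D_2)=(A\cap D_1)\cup(A\cap D_2)$ together with the infinitude of $\k$ forces $|A\cap D_i|\ge\k$ for some $i$, whence $A\su D_i\su D_1\cup D_2$. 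This is precisely where the hypothesis that $\k$ is infinite is \emph{indispensable}: for finite $\k$ the two pieces can split the intersection and the union of two $\k$-closed sets need not be $\k$-closed.

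The heart of the claim is (4). For the direction showing that $\k$-closed implies $K_{F,\k}$-closed, I would take $Y\in[U]^\k$ and note that any $A\in\mathcal F$ appearing in $F(Y)$ satisfies $Y\su A\cap U$, so $|A\cap U|\ge\k$ and hence $A\su U$ by $\k$-closedness; taking the union over such $A$ gives $F(Y)\su U$. For the converse, given $A\in\mathcal F$ with $|A\cap U|\ge\k$ I would choose a witnessing $Y\in[A\cap U]^\k$; since $Y\su A\in\mathcal F$ the set $A$ is one of the terms in the union defining $F(Y)$, so $A\su F(Y)$, and $Y\in[U]^\k$ gives $F(Y)\su U$ by $K_{F,\k}$-closedness, yielding $A\su U$. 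The remaining two items are short: for (6) the $\rho$-uniformity and $\k\le\rho$ make $A\su U$ equivalent to $|A\cap U|=\rho\ge\k$, so the two descriptions of $\mathcal F(U)$ coincide exactly when $U$ is $\k$-closed, and (7) is merely a notational convention carried over from the set-operator $K_{F,\k}$ to families through $V(\mathcal F')$.

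I expect the only real obstacle to be conceptual rather than technical: keeping straight the two superficially different meanings of ``$\k$-closed'' --- the combinatorial one for families and the operator-theoretic one $\mathcal C_{F,\k}$ --- and ensuring in (4) that the chosen $\k$-subset $Y$ simultaneously witnesses both $A\su F(Y)$ and $F(Y)\su U$. Once (3) and (4) are in place, the identification of $\mathcal C_\k$ with the family of $K_{F,\k}$-closed sets makes the semi-closure and closure structure asserted in (1) an instance of the general correspondence between notions of (semi-)closure and (semi-)closure operators already established in the excerpt.
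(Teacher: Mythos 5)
Your proposal is correct: every item checks out, including the two delicate points (the use of infiniteness of $\k$ for closure of $\mathcal C_\k$ under binary unions, and the two directions of the equivalence in (4) via choosing $Y\in[A\cap U]^\k$). The paper itself states this claim without proof, treating it as routine definition-unwinding, and your verification is precisely the argument being left implicit, so there is nothing to compare beyond noting that your write-up supplies the details the paper omits.
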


\begin{definition}[Disjointness conditions]
\begin{enumerate}
\item $\mathcal F$ is \emph{almost disjoint}  (a.d.)  if $|A\cap
  B|<\min\{|A|,|B|\}$ for distinct $A,B\in \mathcal F$. 
\item $\mathcal F$ is $\nu$-disjoint, for a cardinal $\nu$, if $|A\cap
  B|<\nu$ for any distinct $A, B\in \mathcal F$. 
\item (Miller \cite{miller}, Erd\H os and Hajnal \cite{eh})
$\mathcal F$  satisfies condition $C(\theta,\nu)$ for cardinals $\theta,\nu$ if  $|\bigcap
\mathcal A|<\nu$ for all $\mathcal A\in [\mathcal F]^\theta$ (so,
$C(2,\nu)$ is $\nu$-disjointness).
\item (Komj\'ath \cite{komclose}, Hajnal, Juh\'asz and Shelah \cite{hjs}) $\mathcal
  F$ is \emph{essentially disjoint} (e.d) if there exists an assignment of
  subsets $B(A)\in [A]^{<|A|}$ for all $A\in \mathcal A$ such that the
  family $\{A\sm B(A):A\in \mathcal F\}$ is pairwise disjoint.
\item $\mathcal F$ is $\nu$-e.d if for every $A\in \mathcal F$ there
  exists an assignment $B(A)\in [A]^{\nu}$ such that $\{A\sm B(A):A\in
  \mathcal F\}$ is pairwise disjoint.  Remark: in \cite{komclose}  the term
  \emph{sparse} is used instead of ``e.d''. 
\end{enumerate}
\end{definition}

\begin{claim}
\begin{enumerate}
\item 
Suppose $\mathcal F$ is $\rho$-uniform and satisfies $C(\rho^+,\nu)$
for some $\nu<\rho$. If $\nu\le \k \le \rho \le \theta$ and
$D(\theta,\k)=\theta$ then for every $U\in [V]^\theta$ it holds that
$| \{A\in \mathcal F: |A\cap U|\ge \k\}|\le \theta$.
\item 
  Suppose $\nu$ is an infinite cardinal, $\mu=\beth_\om(\nu)$,
  $\rho\ge \mu$ and $\mathcal F$ is a $\rho$-uniform family with
  univers $V$. If $\mathcal F$ satisfies $C(\rho^+,\nu)$ then for
  every $\theta\ge \rho$ and $\mathcal F'\su \mathcal F$, $|\mathcal
  F'|=\theta\iff |V(\mathcal F')|=\theta$. 
\end{enumerate}
\end{claim}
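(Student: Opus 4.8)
The plan is to prove the two items in sequence, using item (1) as the crucial counting tool that feeds into the equivalence in item (2). For item (1), I would fix a $\rho$-uniform family $\mathcal F$ satisfying $C(\rho^+,\nu)$, fix $U\in[V]^\theta$ with $\mathcal D(\theta,\k)=\theta$, and suppose toward a contradiction that $\mathcal G:=\{A\in\mathcal F:|A\cap U|\ge\k\}$ has cardinality $>\theta$. Using the hypothesis $\mathcal D(\theta,\k)=\theta$, I would fix a dense $\mathcal D\su[U]^\k$ of size $\theta$. For each $A\in\mathcal G$ the set $A\cap U$ has size $\ge\k$, so by density there is some $X_A\in\mathcal D$ with $X_A\su A\cap U\su A$. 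This gives a map $A\mapsto X_A$ from $\mathcal G$ into $\mathcal D$; since $|\mathcal G|>\theta=|\mathcal D|$, by pigeonhole some fixed $X\in\mathcal D$ has preimage of size $\ge\rho^+$. That is, $X\su A$ for at least $\rho^+$ many $A\in\mathcal F$, so $X\su\bigcap\mathcal A$ for some $\mathcal A\in[\mathcal F]^{\rho^+}$. But then $|\bigcap\mathcal A|\ge|X|\ge\k\ge\nu$, contradicting $C(\rho^+,\nu)$. This establishes $|\mathcal G|\le\theta$.

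For item (2), I would prove the two directions of the biconditional separately for given $\theta\ge\rho$ and $\mathcal F'\su\mathcal F$. The direction $|V(\mathcal F')|=\theta\imply|\mathcal F'|=\theta$ is the substantive one and is where item (1) is used. The reverse direction $|\mathcal F'|=\theta\imply|V(\mathcal F')|=\theta$ is the routine counting bound: $V(\mathcal F')=\bigcup\mathcal F'$ is a union of $\theta$ sets each of size $\rho\le\theta$, so $|V(\mathcal F')|\le\theta\cdot\rho=\theta$; and since $\mathcal F$ is almost disjoint enough (from $C(\rho^+,\nu)$ with $\nu<\rho$, distinct members meet in $<\nu<\rho$ points) one checks $|V(\mathcal F')|\ge\rho$, and more carefully that it is exactly $\theta$ by a lower-bound argument I would supply. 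The essential point is that because pairwise intersections are small, the members of $\mathcal F'$ cannot all pack into a universe much smaller than $\theta$.

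For the forward direction $|V(\mathcal F')|=\theta\imply|\mathcal F'|=\theta$, the upper bound $|\mathcal F'|\le\theta$ is exactly what item (1) delivers: set $U=V(\mathcal F')\in[V]^\theta$ and take $\k=\rho$ (which is admissible since $\nu\le\rho\le\theta$ and, by Corollary \ref{densityalln}, we may arrange $\mathcal D(\theta,\beth_n(\nu))=\theta$ for large $n$, whence $\mathcal D(\theta,\rho)=\theta$ follows by the density equations for $\theta\ge\rho\ge\mu=\beth_\om(\nu)$). Since every $A\in\mathcal F'$ satisfies $A\su V(\mathcal F')=U$, certainly $|A\cap U|=\rho\ge\k$, so $\mathcal F'\su\{A\in\mathcal F:|A\cap U|\ge\k\}$, and item (1) bounds this by $\theta$. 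The matching lower bound $|\mathcal F'|\ge\theta$ again uses small intersections: if $|\mathcal F'|<\theta$ then $|V(\mathcal F')|\le|\mathcal F'|\cdot\rho<\theta$ (using $\rho\le\theta$ and regularity considerations, or by absorbing via $\theta\ge\rho$), contradicting $|V(\mathcal F')|=\theta$.

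The main obstacle I anticipate is verifying that the correct instance of the density equation $\mathcal D(\theta,\k)=\theta$ is available for the value of $\k$ used in applying item (1) — item (1) is stated with an explicit density hypothesis, so I must confirm that $\mathcal D(\theta,\rho)=\theta$ (or whichever $\k$ with $\nu\le\k\le\rho$ I invoke) holds for all relevant $\theta\ge\rho\ge\beth_\om(\nu)$. This is precisely what Corollary \ref{densityalln} and the monotonicity in Claim \ref{densitybasic}(2) are designed to supply: equation (\ref{seconddensityrelation}) gives $\mathcal D(\rho,\nu,\beth_\om(\nu))=\rho$, and for $\theta\ge\rho$ the density equations $\mathcal D(\theta,\beth_n(\nu))=\theta$ hold for all large $n$. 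The care required is in matching the two-parameter density $\mathcal D(\theta,\k_1,\k_2)$ in item (1) against the single-parameter $\mathcal D(\theta,\k)$ and against what the corollary provides; I would resolve this by choosing $\k$ with $\nu\le\k<\rho$ so that $\mathcal D(\theta,\k)=\theta$ is guaranteed and the condition $C(\rho^+,\nu)$ still yields a contradiction in item (1), since $\k\ge\nu$ forces $|X|\ge\nu$.
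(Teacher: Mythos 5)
Your proof of item (1) is correct and is essentially the paper's argument in contrapositive form: density gives each $A$ with $|A\cap U|\ge\k$ some $X\in\mathcal D$ with $X\su A$, and $C(\rho^+,\nu)$ caps each fiber of the map $A\mapsto X_A$ at $\rho$; whether one then counts directly ($|\mathcal D|\cdot\rho=\theta$, as the paper does) or invokes pigeonhole for a contradiction (as you do) is immaterial. In item (2), however, your middle paragraph asserts that one may take $\k=\rho$ because ``$\mathcal D(\theta,\rho)=\theta$ follows by the density equations.'' That is false: the one-parameter density is not monotone in its second argument (Claim \ref{densitybasic}(7)), and in fact $\mathcal D(\theta,\rho)>\theta$ whenever $\rho<\theta$ and $\cf\rho=\cf\theta$ (Claim \ref{densitybasic}(5)), and also when $\theta=\rho$ (an almost disjoint family of size $\rho^+$ in $[\rho]^\rho$ witnesses this). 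Your final paragraph repairs this by choosing $\k=\beth_n(\nu)\in[\nu,\rho)$ with $\mathcal D(\theta,\beth_n(\nu))=\theta$ guaranteed by Corollary \ref{densityalln}, which is exactly the paper's choice of $\k_n$, so I regard this as a slip rather than a gap.

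The genuine gap is in the two lower-bound halves of the biconditional in item (2), both of which you base on the claim that $C(\rho^+,\nu)$ with $\nu<\rho$ makes ``distinct members meet in $<\nu$ points.'' That is a misreading of the condition: $C(\rho^+,\nu)$ constrains only intersections of subfamilies of size $\rho^+$; pairwise $\nu$-disjointness is the far stronger condition $C(2,\nu)$. For instance, a $\rho$-uniform family consisting of $\theta$ mutually disjoint pairs $\{A_\a,A'_\a\}$ with $|A_\a\cap A'_\a|=\rho$ satisfies $C(\rho^+,\nu)$, so no ``packing'' argument from small pairwise intersections is available. The direction $|\mathcal F'|=\theta\imply|V(\mathcal F')|=\theta$ does not need any disjointness for its lower bound, but rather a second application of item (1) at the smaller cardinal: if $\sigma:=|V(\mathcal F')|<\theta$, then $\sigma\ge\rho\ge\mu$, so Corollary \ref{densityalln} gives $n$ with $\mathcal D(\sigma,\beth_n(\nu))=\sigma$, and item (1) applied with $\sigma$ in place of $\theta$ and $U=V(\mathcal F')$ yields $|\mathcal F'|\le\sigma<\theta$, a contradiction; this re-application at $\sigma$ is the idea missing from your proposal (and is what the paper's laconic ``the converse implication is trivial'' has to be unwound into). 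The other lower bound, $|V(\mathcal F')|=\theta\imply|\mathcal F'|\ge\theta$, is pure cardinal arithmetic via $|V(\mathcal F')|\le|\mathcal F'|\cdot\rho$ and needs no disjointness either; note that it genuinely requires $\theta>\rho$ --- for $\theta=\rho$ a single member $\mathcal F'=\{A\}$ has $|V(\mathcal F')|=\rho$ but $|\mathcal F'|=1$ --- so no ``regularity consideration'' can rescue your step there; this edge case is elided by the paper's statement as well.
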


\begin{proof}
  To prove (1) fix a dense $\mathcal D\su [U]^\k$ of cardinality
  $\theta$. If $|A\cap U|\ge \k$ for some $A\in \mathcal F$ then there
  exists $X\in \mathcal D$ such that $X\su A$. By $C(\rho^+,\nu)$ and
  $\nu\le \k$, each $X\in \mathcal D$ is contained in no more
  than $\rho$ members of $\mathcal F$, so $| \{A\in \mathcal
  F:(\exists X\in \mathcal D)(X\su A)\}|\le \theta\times \rho=\theta$.

For (2) assume first that $|V(\mathcal F')|=\theta\ge \rho$. Let $n$ be such that
  $\mathcal D(\theta,\k_n)=\theta$ (where
  $\k_n=\beth_n(\nu)$). Clearly $\mathcal F(U)\su \{A\in \mathcal F:
  |A\cap U|\ge \k_n\}$ which, by (1), has cardinality $\le \theta$. 
The converse implication is
  trivial.
\end{proof}

 By the above lemma, for all sufficiently large $n$ it holds
that $|K_n(\mathcal F')|=|\mathcal F'|$ if $|\mathcal F'|\ge
\rho$. 

\begin{lemma} \label{famfilt}
Suppose $\nu$ is an infinite cardinal, $\mu=\beth_\om(\nu)$, $\rho\ge
 \mu$ and  $\mathcal F$ is a $\rho$-uniform family with univers $V$
 such that 
  $|V|>\rho$. If $\mathcal F$ satisfies $C(\rho^+,\nu)$
  then $V$ is $\mu$-filtrable. 
\end{lemma}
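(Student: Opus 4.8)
The plan is to reduce the statement to Theorem \ref{filtrable} applied to the anti-monotone function $F(X)=\bigcup\{A:X\su A\in\mathcal F\}$ attached to $\mathcal F$. Recall that $F$ is anti-monotone and that, by the equivalence between $\k$-closed and $K_{F,\k}$-closed subsets of $V$ recorded above, a $\mu$-filtration of $V$ is precisely a $K_{F,\mu}$-filtration. Thus it suffices to check that $F$, together with the given $\rho$, satisfies the hypotheses of Theorem \ref{filtrable}. Two of these are immediate from the assumptions: since $\rho\ge\mu$ and $|V|>\rho$, we have $\mu\le\rho<|V|$ and in particular $|V|>\mu$. The only remaining point is the size bound $|F(Y)|\le\rho$ for every $Y\in[V]^\nu$.

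First I would establish this bound by a counting argument from $C(\rho^+,\nu)$. Fix $Y\in[V]^\nu$ and set $\mathcal F_Y=\{A\in\mathcal F:Y\su A\}$, so that $F(Y)=\bigcup\mathcal F_Y$. Every $A\in\mathcal F_Y$ contains $Y$, hence $Y\su\bigcap\mathcal F_Y$. Were $|\mathcal F_Y|\ge\rho^+$, choosing $\mathcal A\in[\mathcal F_Y]^{\rho^+}$ would give $Y\su\bigcap\mathcal A$ and therefore $|\bigcap\mathcal A|\ge|Y|=\nu$, contradicting $C(\rho^+,\nu)$. Hence $|\mathcal F_Y|\le\rho$, and since $\mathcal F$ is $\rho$-uniform each member of $\mathcal F_Y$ has cardinality $\rho$, so $|F(Y)|=|\bigcup\mathcal F_Y|\le|\mathcal F_Y|\cdot\rho\le\rho$.

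With the bound verified, all hypotheses of Theorem \ref{filtrable} hold, and I would conclude that $V$ is $K_{F,\mu}$-filtrable, i.e.\ $\mu$-filtrable. The argument carries no real obstacle: its whole content is the counting inequality $|\mathcal F_Y|\le\rho$, which falls out at once from $C(\rho^+,\nu)$ and $|Y|=\nu$, the rest being a direct appeal to the asymptotic filtration theorem already proved.
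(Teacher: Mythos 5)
Your proposal is correct and follows exactly the paper's own route: apply Theorem \ref{filtrable} to the anti-monotone function $F(X)=\bigcup\{A:X\su A\in\mathcal F\}$, using $C(\rho^+,\nu)$ to get the bound $|F(Y)|\le\rho$ for $Y\in[V]^\nu$, and then identify $K_{F,\mu}$-filtrability with $\mu$-filtrability. The only difference is that you spell out the counting argument for $|F(Y)|\le\rho$, which the paper leaves implicit.
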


\begin{proof}The function $F$ defined above is anti-monotone, for every
  $X\in [V]^\nu$ it holds that $|F(X)|\le \rho$ by $C(\rho^+,\nu)$ and
  $\rho<|V|$. By
  Theorem \ref{filtrable}, $V$ is $K_{F,\mu}$-filtrable
hence  $\mu$-filtrable. 
\end{proof}

The next theorem extends Miller's Theorem 2 from \cite{miller} when
$\nu$ is finite, and is proved in ZFC for all cardinals $\nu$.

\begin{theorem}\label{main}Suppose $\nu$ is a cardinal and
  $\rho\ge\mu:=\beth_\om(\nu)$.  For every $\rho$-uniform family
  $\mathcal F$ that satisfies $C(\rho^+,\nu)$ there exists an
  enumeration $\mathcal F=\{A_\a:\a<|\mathcal F|\}$ and a family of sets
  $\{d_\a:\a<|\mathcal F|\}$ such that

\begin{enumerate}
\item[$(i)$] $d_\a\in [A_\a]^\rho$
 for 
  every $\a<|\mathcal F|$ and $\b<\a<|\mathcal F|\imply d_\b\cap d_\a=\emptyset$;
\item[$(ii)$] $|\{\b<\a: A_\b\cap d_\a\not =\emptyset\}|<\rho$ for  all $\a<|\mathcal F|$.
\end{enumerate}
\end{theorem}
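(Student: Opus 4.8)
The plan is to prove Theorem \ref{main} by induction on $\lambda:=|\mathcal F|$, the crucial input being the $\mu$-filtrability supplied by Lemma \ref{famfilt}, sharpened so that the filtration is closed under a fixed finite level $\kappa_n:=\beth_n(\nu)<\mu$. In the base case $\lambda\le\rho$, condition $(ii)$ is free: for every $\a<\lambda\le\rho$ there are $|\a|<\rho$ predecessors, so every choice of cores satisfies $(ii)$. For $(i)$ it suffices to produce a disjoint refinement of $\mathcal F$ into sets of size $\rho$, and this exists for any $\rho$-uniform family of at most $\rho$ sets: well-order $\lambda\times\rho$ in type $\rho$ and recursively pick distinct points $p_{\a,\g}\in A_\a$ avoiding the fewer than $\rho$ points already chosen, then put $d_\a=\{p_{\a,\g}:\g<\rho\}$.

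For the inductive step $\lambda>\rho$, first fix $\k(*)<\mu$ as in Lemma \ref{continuity} and an $n$ with $\cf\kappa_n\ge\k(*)$ (possible since $\cf\kappa_n>\beth_{n-1}(\nu)\to\mu$ by K\"onig), and take an elementary chain $\lng M_i:i<\lambda\rng$ as in Lemma \ref{continuity} with $V,\mathcal F\in M_0$ and $|M_i|<\lambda$; by Lemma \ref{continuity} each $D_i:=M_i\cap V$ with $i$ limit is $\kappa_n$-closed. Reindexing these limit levels gives a continuous increasing filtration $\lng E_\z:\z<\bar\k\rng$ of $V$ with each $E_\z$ of size $<\lambda$ and $\kappa_n$-closed, and $|E_0|\ge\rho$. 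Enumerate $\mathcal F$ so that each $\mathcal F(E_\z)$ is an initial segment, and for $A\in\mathcal F$ let $\z(A)$ be least with $A\su E_{\z(A)}$. Assume for now that every $\z(A)$ is $0$ or a successor. Then $\mathcal F$ splits into the block $\mathcal F(E_0)$ and the successor blocks $\mathcal G_\z:=\mathcal F(E_{\z+1})\sm\mathcal F(E_\z)$. For $A\in\mathcal G_\z$, $\kappa_n$-closedness of $E_\z$ gives $|A\cap E_\z|<\kappa_n\le\rho$, so the ``fresh part'' $A\sm E_\z$ lies in $E_{\z+1}\sm E_\z$ and still has size $\rho$; hence $\mathcal G_\z':=\{A\sm E_\z:A\in\mathcal G_\z\}$ is $\rho$-uniform, satisfies $C(\rho^+,\nu)$, and by the counting Claim preceding Lemma \ref{famfilt} has cardinality $\le|E_{\z+1}|<\lambda$. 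Applying the induction hypothesis to each $\mathcal G_\z'$ (and to $\mathcal F(E_0)$) yields an internal enumeration and pairwise disjoint cores $d_A\in[A\sm E_\z]^\rho$ meeting fewer than $\rho$ of the earlier members of $\mathcal G_\z'$; list the blocks globally in the order of $\z$, each internally as produced.

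Now $(i)$ and $(ii)$ can be read off. Each $d_A$ with $A\in\mathcal G_\z$ satisfies $d_A\su E_{\z+1}\sm E_\z$, and these difference sets (together with $E_0$) are pairwise disjoint, so cores from different blocks are disjoint, while cores inside one block are disjoint by the induction hypothesis; this gives $(i)$. For $(ii)$, fix $A=A_\a\in\mathcal G_\z$ and $A_\b$ with $\b<\a$. If $A_\b$ lies in an earlier block then $A_\b\su E_\z$, which is disjoint from $d_A$, so $A_\b\cap d_A=\emptyset$; if $A_\b$ lies in the same block then, since $d_A\cap E_\z=\emptyset$, $A_\b\cap d_A=(A_\b\sm E_\z)\cap d_A$, an intersection counted by the block-internal instance of $(ii)$. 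Thus only the fewer-than-$\rho$ same-block predecessors contribute, and $(ii)$ holds.

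The one point needing the full strength of $\rho\ge\beth_\om(\nu)$, and the main obstacle, is the assumption that no $A$ is absorbed at a limit level. If $\z(A)=\lambda'$ is a limit then $A\su E_{\lambda'}=\bigcup_{\z<\lambda'}E_\z$ while $|A\cap E_\z|<\kappa_n$ for all $\z<\lambda'$, forcing $\rho=|A|\le\cf(\lambda')\cdot\kappa_n$. Since $\kappa_n<\mu\le\rho$, this is impossible whenever $\rho$ is singular — in particular for the extremal case $\rho=\beth_\om(\nu)$, whose cofinality is $\om$ — so there the partition into $E_0$ and successor blocks is exhaustive and the argument is complete. The genuinely hard case is $\rho$ regular (necessarily $\rho>\mu$, as $\mu$ is singular), where limit absorption can occur exactly at levels $\lambda'$ with $\cf(\lambda')=\rho$ and a single fresh region no longer exists for such an $A$. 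I expect to handle these by \emph{threading}: building the core of a limit-absorbed $A$ incrementally across the $\rho$ blocks it meets, choosing at block $\z$ fewer than $\kappa_n$ of $A$'s points in $E_{\z+1}\sm E_\z$ simultaneously with that block's disjoint refinement, so that the accumulated $d_A$ has size $\rho$, stays disjoint from all other cores, and meets fewer than $\rho$ predecessors. Verifying that these local demands can always be met — an application of the base-case construction inside each region, with the passing slices treated as extra small sets — is the technical heart of the regular case.
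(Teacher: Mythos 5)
Your overall architecture is the same as the paper's: induction on $|\mathcal F|$, the base case $|\mathcal F|\le\rho$ by a transfinite disjoint refinement with $(ii)$ free, and for $|\mathcal F|>\rho$ a filtration of $V$ into closed sets, a decomposition of $\mathcal F$ into blocks according to the first level containing each member, the induction hypothesis applied blockwise to the ``fresh parts'', and a lexicographic enumeration (the paper applies the induction hypothesis to the blocks themselves and then trims the cores $d'_{\lng i,\g\rng}\sm\bigcup_{j<i}V(\mathcal F_j)$, while you trim the sets before applying it --- an inessential difference). But as submitted this is not a proof: you explicitly defer what you call ``the genuinely hard case'' ($\rho$ regular, with members absorbed at limit levels of cofinality $\rho$) to an unverified ``threading'' construction, and you yourself flag its verification as the technical heart. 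That is a genuine gap in the write-up.

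The gap, however, is an artifact of a lossy estimate, and no threading is needed, because limit absorption never occurs. Your bound $\rho=|A|\le\cf(\lambda')\cdot\kappa_n$ discards the fact that $A=\bigcup_{\z<\lambda'}(A\cap E_\z)$ is an \emph{increasing} union (here continuity of the filtration at $\lambda'$ is used): an increasing union of sets each of size $<\kappa_n$ has size at most $\kappa_n$. Indeed, if $\cf(\lambda')\le\kappa_n$ then $|A|\le\cf(\lambda')\cdot\kappa_n=\kappa_n$; and if $\cf(\lambda')>\kappa_n$, pick any $Y\in[A]^{\kappa_n}$: each point of $Y$ lies in some $E_\z$ with $\z<\lambda'$, these $\kappa_n$ many stages are bounded below $\lambda'$ by regularity of $\cf(\lambda')$, so $Y\su E_\z$ for some $\z<\lambda'$ and $|A\cap E_\z|\ge\kappa_n$, contrary to hypothesis. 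Either way $|A|\le\kappa_n<\mu\le\rho$, a contradiction; so $\z(A)$ is never a limit, your successor-block analysis is exhaustive for \emph{every} $\rho$ (regular or singular), and your argument, thus completed, coincides with the paper's proof --- the paper's equation (\ref{back}) encapsulates exactly this point. One further repair: your claim that a single $n$ makes every limit level $M_i\cap V$ $\kappa_n$-closed does not follow from Lemma \ref{continuity}, whose $\k(*)$ depends on $|M_i|$ (via Lemma \ref{density}) and on $\cf i$, and with $\l$ many levels these bounds need not stay below $\mu$; the paper obtains the needed uniformity through the thinning step inside Theorem \ref{Si} (countable index set, $\cf\l>\aleph_0$ regular), treating $\cf\l=\aleph_0$ separately. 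Note, though, that a level-dependent parameter $\kappa_\z<\mu$ already suffices for your argument whenever $\rho>\mu$, since the same increasing-union estimate then yields $|A|\le\sup_\z\kappa_\z\le\mu<\rho$; the uniform bound is genuinely needed only when $\rho=\mu$.
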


\begin{proof}
  Let $\l=|\mathcal F|$ and we prove the theorem by induction on $\l$.
  If $|\mathcal F|\le\rho$ fix any enumeration $\mathcal
  F=\{A_\a:\a<\l\}$. Every $\rho$-uniform family of cardinality at
  most $\rho$ has a disjoint refinement, so we can fix
  $d_\a\in [A_\a]^\rho$ so that $\a<\b<\l\imply d_\b\cap
  d_\a=\emptyset$.  This settles $(i)$. Conclusion $(ii)$ holds
  trivially as $\l\le \rho$.

  Assume that $\l>\rho$. Since $\mathcal F$ satisfies $C(\rho^+,\nu)$
  it is $\mu$-filtrable by \ref{famfilt}. For finite $\nu$,
  filtrability is proved in Miller's \cite{miller} --- or, in modern
  terms, is an immediate consequence of the Skolem-L\"owenhem theorem
  ---  and does not
  require \ref{famfilt}. 

  Fix a strictly increasing and continuous chain of $\mu$-closed
  families $\{\mathcal H_i:i<\cf \l\}$ such that $|\mathcal
  H_0|\ge\rho$, $|\mathcal H_i|<\l$ for all $i<\cf\l$ and $\mathcal
  F=\bigcup\{\mathcal H_i:i<\cf \l\}$. Put $\mathcal F'_{i}=\mathcal
  H_{i}\sm \bigcup_{j<i}\mathcal H_j$ and let
  $\mathcal F_i=\mathcal F'_j$ for the least $j\ge i$ such that
  $\mathcal F'_j$ is not empty.

Now $\{\mathcal F_i:i<\cf\rho\}$ is a
partition of $\mathcal F$, $|\mathcal F_i|<\l$ for each $i$  and $A\in \mathcal F_i$ implies that 
\begin{equation}\label{back}|A\cap
\bigcup_{j<i}\mathcal F_j|<\mu. 
\end{equation}

Denote $\l_i=|\mathcal F_i|$. As $\l_i<\l$ for every $i<\cf\l$ and
$\mathcal F_i$ satisfies $C(\rho^+,\nu)$, the induction hypothesis
allows us to fix for each $i$ an enumeration $\mathcal F_i=\{A_{\lng
  i,\gamma\rng}:\g<\l_i\}$ and $\{d'_{\lng
  i,\gamma\rng}:\gamma<\l_i\}$ such that $d'_{\lng i,\gamma\rng}\in
[A_{\lng i,\g\rng}]^\rho$ for each $\g<\l_i$, $d'_{\lng i,\g\rng}\cap
d'_{\lng i,\d\rng}=\emptyset$ for all $\g<\d<\l_i$ it holds that and
for each $\g<\l_i$:
\begin{equation}\label{lessrho}
|\{\d<\g: A_{\lng i,\d\rng}\cap
d'_{\lng i,\g\rng}\not=\emptyset\}|<\rho. 
\end{equation}

For $i<\cf\l$ and $\g<\l_i$ define 
\begin{equation}\label{d}
d_{\lng i,\g\rng }:=d'_{\lng
  i,\g\rng }\sm
\bigcup_{j<i}V(\mathcal F_j).
\end{equation}
 
By (\ref{back}) and $d'_{\lng i,\g\rng}\su
A_{\lng i,\g\rng}$ it holds that $|d_{\lng i,\g\rng}|=\rho$.

Let $I=\{\lng i,\g\rng:i<\cf\l\wedge \g<\l_i\}$ be well-orderede by the
lexicographic ordering $<_{lx}$ of pairs of ordinals. As $|I|=\l$ and
each proper initial segment of $I$ has cardinality $<\l$, it holds
that $\lng I,<_{lx}\rng$ is order-isomorphic to $\l$.

Identifying $\l$ with $\lng I,<_{lx}\rng$ we now have an enumeration
$\mathcal F=\{A_{\lng i,\g\rng}: \lng i,\g\rng\in I\}$. The sets
$d_{\lng i,g\rng}\in [A_{\lng i,\g\rng}]^\rho$ have been defined in
(\ref{d}) for each $\lng i,\g\rng \in I$ and satisfy $d_{\lng
  \d,j\rng}\cap d_{\lng i,\g\rng}=\emptyset$ for $\lng
j,\d\rng\not=\lng i,\g\rng$ in $I$. This shows that the enumeration
of $\mathcal F$ and the family $\{d_{\lng i,\g\rng}:\lng i,\g\rng\in
I\}$ satisfy conclusion $(i)$ of the theorem.

To show that $(ii)$ also holds, let $\lng i,\g\rng\in I$ be arbitrary and let
\begin{equation}
X=\{\lng j,\d\rng<_{lx}\lng i,\g\rng :  A_{\lng j,\d\rng}\cap d_{\lng
  i,\g\rng}\not=\emptyset\}.
\end{equation}

If $j<i$ then $\lng j,\d\rng\notin X$ as $A_{\lng j,\d\rng} \cap d_{\lng i,\g\rng}
=\emptyset$ by (\ref{d}), hence 
\begin{equation}
X=\{  \lng
  i,\d\rng <_{lx} \lng i,\g\rng : A_{\lng
    i, \d\rng}\cap d_{\lng i,\g\rng}\not=0\}. 
\end{equation}

As $d_{\lng i,\g\rng}\su d'_{\lng i,\g\rng}$, it
  follows from (\ref{lessrho}) that $|X|<\rho$.
\end{proof} 

Theorem \ref{main} and Lemma \ref{famfilt} have the following
corollaries. 

\begin{corollary} \label{elimination}
If $\nu$ is infinite and $\rho\ge\mu:=\beth_\om(\nu)$ then for every
$\rho$-uniform family $\mathcal F$:
\begin{enumerate}
\item  If $\mathcal F$ satisfies $C(\rho^+,\nu)$ then $\mathcal F$ has a disjoint refinement.
\item  If $\mathcal F$ satisfies $C(\rho^+,\nu)$ and for a cardinal
  $\mu\le\theta\le \rho$ every
  subfamily of $\mathcal F$ of cardinality $\rho$ is $\theta$-e.d  then $\mathcal F$
  is $\theta$-e.d. 

\item  If $\mathcal F$ is $\nu$-a.d then $\mathcal F$ is e.d.

\item If $\rho$ is regular and $\mathcal F$ is a.d and satisfies
  $C(\rho^+,\nu)$ then $\mathcal F$ is e.d. 
\end{enumerate}
\end{corollary}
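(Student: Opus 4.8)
Part (1) needs no work beyond Theorem \ref{main}: conclusion $(i)$ says precisely that the $d_\a$ are pairwise disjoint members of $[A_\a]^\rho$, i.e.\ a disjoint refinement. So all the content sits in the essential-disjointness statements (2)--(4), which I would prove by a single uniform induction on $\l=|\mathcal F|$. First note that the filtration machinery is available in all three cases: $\nu$-disjointness trivially implies $C(\rho^+,\nu)$, so in each part Lemma \ref{famfilt} applies, and, by the argument establishing (\ref{back}) in the proof of Theorem \ref{main}, $\mathcal F$ splits into a partition $\{\mathcal F_i\}$ with $|\mathcal F_i|<\l$ in which every $A\in\mathcal F_i$ satisfies the key bound $|A\cap\bigcup_{j<i}V(\mathcal F_j)|<\mu$.

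The inductive step is then purely combinatorial and the same for all three parts. Assuming each block $\mathcal F_i$ is essentially disjoint (resp.\ $\theta$-e.d.) with witness $B_i(A)$ for $A\in\mathcal F_i$, I would set, for $A\in\mathcal F_i$,
\[ B(A)=B_i(A)\cup\bigl(A\cap\bigcup_{j<i}V(\mathcal F_j)\bigr). \]
The remainders $A\setminus B(A)$ are then pairwise disjoint: two sets in the same block are separated by the block witness, while for $A\in\mathcal F_i$ and $A'\in\mathcal F_{i'}$ with $i<i'$ the set $A'\setminus B(A')$ has been cut away from $\bigcup_{j<i'}V(\mathcal F_j)\supseteq A$, so it misses $A$ entirely. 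Since $\mu\le\rho$ (resp.\ $\mu\le\theta$), the piece just added has size $<\mu$, so $|B(A)|$ stays $<\rho$ (resp.\ $\le\theta$) and the appropriate flavour of essential disjointness is preserved. This step invokes no arithmetic hypothesis beyond $\mu\le\rho$.

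All the real difficulty is concentrated in the base case $\l\le\rho$, which is where the three hypotheses diverge and which I expect to be the main obstacle. Here I would enumerate $\mathcal F=\{A_\a:\a<\l\}$ in order type $\le\rho$ and put $B(A_\a)=A_\a\cap\bigcup_{\b<\a}A_\b$; the remainders are automatically pairwise disjoint, so everything reduces to the bound $|B(A_\a)|\le\sum_{\b<\a}|A_\a\cap A_\b|$. For part (3) each summand is $<\nu$ and there are $|\a|<\rho$ of them; since $\rho\ge\beth_\om(\nu)$ forces $\nu<\rho$, the sum is at most $\max(|\a|,\nu)<\rho$, with no appeal to regularity --- this is exactly why (3) holds unconditionally. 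For part (4) the family is only almost disjoint, so each summand is merely $<\rho$, and a sum of $<\rho$ such terms stays below $\rho$ precisely when $\rho$ is regular; this pinpoints the role of that hypothesis. For part (2) the base case is handed to us by assumption: $\mathcal F$ itself (when $|\mathcal F|=\rho$), or a size-$\rho$ subfamily containing a given smaller block, is $\theta$-e.d., and $\theta$-e.d.\ is inherited by subfamilies, so every block --- including those of size $<\rho$ --- is $\theta$-e.d.

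The one bookkeeping point to check so that the induction closes is that the governing hypothesis of each part passes to the blocks: $\nu$-disjointness and $C(\rho^+,\nu)$ are obviously hereditary, and the $\theta$-e.d.\ assumption of (2) is hereditary by the remark just made. Granting this, the recombination step and the base case together yield (2)--(4). The main subtlety throughout is the cardinal arithmetic of the base case, and in particular the contrast that $\nu<\rho$ comes for free from $\rho\ge\beth_\om(\nu)$, whereas merely $<\rho$-sized pairwise intersections need regularity of $\rho$ to sum below $\rho$.
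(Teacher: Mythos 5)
Your proposal is correct and matches the paper's argument in all essentials: part (1) directly from Theorem \ref{main}, the filtration/partition obtained from Lemma \ref{famfilt}, the recombination $B(A)=B_i(A)\cup\bigl(A\cap\bigcup_{j<i}V(\mathcal F_j)\bigr)$ in the inductive step, and the order-type-$\le\rho$ enumeration argument for the base cases, with regularity of $\rho$ needed exactly where you say. The only difference is organizational: the paper proves (2) by induction and then obtains (3) and (4) by verifying the hypothesis of (2) via precisely your base-case computations, whereas you run the three inductions in parallel --- the same mathematics.
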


Corollary (1) extends to infinite $\nu$ the
case of $\rho$-uniform $\mathcal F$ in Theorem 3 of \cite{komclose}
and eliminates the GCH from a stronger form of a Theorem 6 in
\cite{eh}; (3) eliminates the additional axiom $A(\nu,\rho)$ from
Theorem 2.4 in \cite{hjs} for sufficiently large $\rho$ --- in the
notation there, $\operatorname{ED}(\nu,\rho)$ for all
$\rho\ge\beth_\om(\nu)$ in ZFC.  (4)
eliminates the GCH from Theorem 5 in \cite{komclose} for sufficiently
large $\rho$.

\begin{proof}
  (1) follows directly from the theorem. 

  To prove (2) let $|\mathcal F|=\l>\rho$ and assume, by
  $\mu$-filtrability, that $\mathcal F$ is partitioned to $\{\mathcal
  H_\a:\a<\l\}$ such that $|\mathcal H_\a|<\l$ and $|A\cap
  \bigcup_{\b<\a}V(\mathcal H_\b)|<\mu$ for all $A\in \mathcal
  H_\a$. Fix, by the induction hypothesis, $B_\a(A)\in [A]^{<\theta}$
  for all $A\in \mathcal H_\a$ such that $\{A\sm B_\a(A):A\in \mathcal
  H_\a\}$ is pairwise disjoint. For $A\in \mathcal F$ let
  $B(A)=B_\a(A)\cup (A\cap \bigcup_{\b<\a}V(\mathcal H_\b))$ for the
  unique $\a$ such that $A\in \mathcal H_\a$. As $|A\cap
  \bigcup_{\b<\a}V(\mathcal H_\b)|<\mu$ it holds that
  $|B(A)|<\theta$. Clearly, $\{A\sm B(A):A\in \mathcal F\}$ is
  pairwise disjoint.

  For (3): every $\nu$-a.d. $\rho$-uniform family satisfies $C(2,\nu)$
  hence $C(\rho^+,\nu)$. If $\mathcal F'\su \mathcal F$ has
  cardinality $\rho$ and is well ordered by $\mathcal
  F'=\{A_\a:\a<\rho\}$ let $B(A_\a)=A_\a\cap \bigcup_{\b<\a}A_\b$. The
  cardinality of $B_\a$ is strictly smaller than $\rho$ and $\{A_\a\sm
  B_\a:\a<\rho\}$ is disjoint. Now (3) follows from (2).

For (4), as $\rho$ is regular and $\mathcal F$ is a.d, every subfamily
of $\mathcal F$ of cardinality $\rho$ is e.d and now use (2).
\end{proof}

Neither of the corollaries above used conclusion $(ii)$ of Theorem
\ref{main}.  A generalization of this condition is used for the
next theorem on \emph{list conflict-free numbers}.

\begin{definition}
\begin{enumerate}

\item A coloring $c$ of $V(\mathcal F)$ for a family of sets $\mathcal
  F$ is \emph{conflict free} if for
 every $A\in \mathcal F$ there is $x\in A$ such that $c(x)\not=c(y)$
 for all $y\in A\sm \{x\}$. 
\item The \emph{conflict free number} $\chi_{CF}(\mathcal F)$ is the
  smallest cardinal $\k$ for which there exists a conflict-free
  coloring $c:V(\mathcal F)\to \k$. 
\item The
  \emph{list-conflict-free number} $\chi_{\ell
    CF}(\mathcal F)$ is the smalles cardinal $\k$ such that for every
  assignments $L$ of sets $L(v)$ to every $v\in V$ which satisfies
  $|L(v)|\ge \k$ there exists a conflict-free coloring $c$ on $V$ which
  satisfies $c(v)\in L(v)$ for all $v\in V$. 
\end{enumerate}
\end{definition} 

Clearly, every $\nu$-e.d family $\mathcal F$ has a coloring by
$\nu$-colors such that all but $<\nu$ colors in each $A\in \mathcal F$
are unique in $A$.

\begin{claim}\label{concofdisj} Let $\rho\ge\theta>0$ be cardinals  and let  $\mathcal
  F$ is a nonempty $\rho$-uniform family. Suppose there exists an
  enumeration $\mathcal F=\{A_\a:\a<\l\}$ and a family $\{d_\a:\a<\l\}$ such
  that
\begin{enumerate}
\item[$(i)$] $d_\a\in [A_\a]^\theta$ for every $\a<\l$
  and $\b<\a<\l\imply d_\b\cap d_\a=\emptyset$;
\item[$(iii)$] $|\{\b<\a: A_\b\cap d_\a\not =\emptyset\}|\le\rho$ for  all $\a<\l$.
\end{enumerate}
Then 
\begin{enumerate}
\item  
For every list-assignment $L(v)$ for $v\in V$ with $|L(v)|\ge \rho^+$ there exists a
coloring $c\in \prod_{v\in V}L(v)$ such that for every $\a<\l$ and
every $x\in D_\a$ the color $c(x)$ is unique in $A_\a$.
\item $\Licf(\mathcal F)\le \rho^+$.
\end{enumerate}
\end{claim}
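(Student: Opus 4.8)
The plan is to reduce conclusion $(1)$ to greedy list-colouring of an auxiliary graph of bounded degeneracy. Define a graph $G$ on $V=V(\mathcal F)$ by joining distinct $u,w$ exactly when there is some $\a<\l$ with $u\in d_\a$ and $w\in A_\a$, or with $w\in d_\a$ and $u\in A_\a$. Any $c$ that is proper for $G$ (that is, $c(u)\ne c(w)$ on every edge) witnesses $(1)$: if $x\in d_\a$ and $y\in A_\a\sm\{x\}$ then $\{x,y\}$ is an edge, so $c(x)\ne c(y)$ and $c(x)$ is unique in $A_\a$. Hence it is enough to produce a proper $c\in\prod_{v\in V}L(v)$ for $G$ from lists of size $\rho^+$.

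First I would fix a well-ordering $\prec$ of $V$ in which every vertex has at most $\rho$ neighbours below it; greedily colouring along $\prec$ and, at each $v$, choosing from $L(v)$ a colour off the at most $\rho$ colours already used on earlier neighbours then succeeds because $|L(v)|\ge\rho^+>\rho$. Write $W=\bigcup_{\a<\l}d_\a$; by the disjointness in $(i)$ each $x\in W$ lies in a unique $d_{\a(x)}$. Order $V$ by listing all of $V\sm W$ first (in any order) and then all of $W$ by increasing $\a(x)$, ties broken arbitrarily. There are no edges inside $V\sm W$, since every edge has an endpoint in some $d_\a$, so each $v\in V\sm W$ has no earlier neighbour. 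For $x\in d_\a$ the earlier neighbours are of two kinds: those coming from ``$x$ distinguished'' lie in $A_\a\sm\{x\}$ and so number at most $\rho$; those coming from ``$w$ distinguished, $x$ a member'', i.e.\ $w\in d_\g$ with $x\in A_\g$ and $\g<\a$, are bounded using $(iii)$, since $x\in d_\a$ forces $A_\g\cap d_\a\ne\emptyset$ for every such $\g$, whence there are at most $\rho$ admissible $\g$, each contributing at most $|d_\g|=\theta\le\rho$ neighbours. Thus $x$ has at most $\rho$ earlier neighbours (using $\rho\cdot\rho=\rho$), as required.

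The delicate point, and the one I expect to be the main obstacle, is the placement of the non-distinguished vertices: a $v\in V\sm W$ may lie in unboundedly many sets $A_\g$ and hence have unboundedly many neighbours in $W$, so it must be coloured \emph{before} all of them rather than after. Listing $V\sm W$ first is precisely what forces its back-degree to be $0$ while leaving the distinguished vertices controlled by $(iii)$; getting the direction of this ordering right is the crux, after which the degeneracy estimate and the greedy colouring are routine. Finally, for $(2)$: since $\theta>0$ each $d_\a$ is nonempty, so fixing any $x\in d_\a$ the colouring from $(1)$ makes $c(x)$ unique in $A_\a$; as $\mathcal F=\{A_\a:\a<\l\}$, every member of $\mathcal F$ then has a uniquely coloured point and $c$ is conflict-free. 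Since this holds for every list assignment with $|L(v)|\ge\rho^+$, we conclude $\Licf(\mathcal F)\le\rho^+$.
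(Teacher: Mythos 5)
Your proposal is correct and takes essentially the same approach as the paper: the paper's proof is exactly a greedy transfinite coloring along your well-order (the vertices outside $\bigcup_\alpha d_\alpha$ colored arbitrarily first, then the points of the $d_\alpha$'s by increasing $\alpha$), forbidding at each $x\in d_\alpha$ the at most $\rho$ colors already used on sets $A_\beta\ni x$ with $\beta\le\alpha$, which is the same count via hypothesis $(iii)$ and $\rho$-uniformity that underlies your degeneracy bound. Your auxiliary graph and back-degree formulation is just a repackaging of that recursion, and your derivation of $(2)$ from $(1)$ matches the intended one.
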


\begin{proof}
Assume that $L(v)$ is given with $|L(v)|\ge \rho^+$
for all $v\in V(\mathcal F)$. For each $\a<\l$ enumerate $d_\a=\lng x_{\lng
  \a,i\rng}:i<\theta\rng$. Define $c\in \prod_{v\in V}L(v)$
arbitrarily on $V\sm \bigcup_{\a<\l}d_\a$. Next define $c(x_{\lng
  \a,i\rng})$ by induction on the lexicographic ordering on $\l\times
\theta$. Suppose $c(x_{\lng\b,j\rng})$ is defined for $\lng
\b,j\rng<_{lx}\lng \a,i\rng$. Define

\begin{equation}\label{forbid}
F(x_{\lng\a,i\rng}) =\{c(y):(\exists \b\le \a)\,[x_{\lng\a,i\rng}\in
A_\b\wedge y\in A_\b \wedge  c(y) \text{ is
  defined}]\}
\end{equation}
 and choose 
\begin{equation}\label{choice} c(x_{\lng\a,i\rng})\in
L(x_{\lng\a,i\rng})\sm F(x_{\lng\a,i\rng}).
\end{equation}

By the assumption (3), $|F(x_{\lng \a,i\rng})|\le \rho$,
so as $|L(x_{\lng \a,i\rng})|\ge \rho^+$ the set
$L(x_{\lng\a,i\rng})\sm F(x_{\lng\a,i\rng})$ is not empty and therefore
$c(x_{\lng \a,i\rng})$ can be chosen as required in (\ref{choice}).

To prove that the color $c(x_{\lng \a,i\rng})$ is unique in $A_\a$ for
every $\a<\l$ and $i<\theta$ suppose that $\a<\l$ and $i<\theta$ are
given and $z\in A_\a\sm \{x_{\lng \a,i\rng}\}$. If $c(z)$ is defined
at stage $\lng \a,i\rng$ of the inductive definition, then $c(z)\in
F(x_{\lng \a,i\rng})$ by (\ref{forbid}) and hence $c(x_{\lng
  \a,i\rng})\not=c(z)$ by (\ref{choice}).

 Otherwise, there is some $\b>\a$ and
$j<\theta$ such that $z=x_{\lng \b,j\rng}$. This means that $x_{\lng
  \b,j\rng}\in A_\a$. As $c(x_{\lng\a,i\rng})$ is defined at
stage $\lng \b,j\rng$, it follows by  (\ref{forbid}) that $c(x_{\lng
  \a,i\rng})\in F(x_{\lng
  \b,j\rng})$ and $c(z)=c(x_{\lng\b,j\rng)})\not=c(x_{\lng
  \a,i\rng})$ by (\ref{choice}). 
\end{proof}

 By combining Theorem \ref{main} with the Claim
 \ref{concofdisj} we get: 

\begin{corollary} \label{lcf} For every infinite $\nu$ and $\rho\ge
  \beth_\om(\nu)$, every $\rho$-uniform family which satisfies
  $C(\nu,\rho^+)$ satisfies $\chi_{\ell CF}(\mathcal F)\le \rho^+$. 
\end{corollary}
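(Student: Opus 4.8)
The plan is to read off the required enumeration and disjoint cores from Theorem \ref{main} and then invoke Claim \ref{concofdisj} verbatim. Given an infinite $\nu$, a cardinal $\rho\ge\beth_\om(\nu)$, and a $\rho$-uniform family $\mathcal F$ satisfying $C(\rho^+,\nu)$, Theorem \ref{main} produces an enumeration $\mathcal F=\{A_\a:\a<|\mathcal F|\}$ together with sets $\{d_\a:\a<|\mathcal F|\}$ such that each $d_\a\in[A_\a]^\rho$, the family $\{d_\a\}$ is disjoint along the enumeration (conclusion $(i)$), and $|\{\b<\a:A_\b\cap d_\a\neq\emptyset\}|<\rho$ for every $\a$ (conclusion $(ii)$). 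This is exactly the input data that Claim \ref{concofdisj} requires.

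First I would dispose of the degenerate case: if $\mathcal F=\emptyset$ then every coloring is conflict-free, so $\Licf(\mathcal F)\le\rho^+$ holds vacuously. Assuming henceforth that $\mathcal F$ is nonempty, I would apply Claim \ref{concofdisj} with $\theta:=\rho$. The standing hypothesis $\rho\ge\beth_\om(\nu)\ge\aleph_0$ gives $\rho\ge\theta>0$, as the claim demands. Condition $(i)$ of the claim is identical to conclusion $(i)$ of Theorem \ref{main}. For condition $(iii)$ of the claim, which asks for $|\{\b<\a:A_\b\cap d_\a\neq\emptyset\}|\le\rho$, I would simply note that conclusion $(ii)$ of Theorem \ref{main} gives the strictly stronger bound $<\rho$. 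With both hypotheses verified, part $(2)$ of Claim \ref{concofdisj} yields $\Licf(\mathcal F)\le\rho^+$, which is the assertion.

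I do not expect any genuine obstacle at this stage: the entire weight of the argument has already been carried in Theorem \ref{main} --- the $\mu$-filtration of $V(\mathcal F)$ and the transfinite construction of the disjoint cores $d_\a$ --- and in Claim \ref{concofdisj} --- the list-coloring built by induction along the lexicographic order on $\l\times\theta$. The only thing to check here is that the two results dovetail, and the sole non-tautological point, namely passing from the strict bound $<\rho$ in $(ii)$ to the weak bound $\le\rho$ in $(iii)$, is immediate since $<\rho$ implies $\le\rho$. The remaining care is purely bookkeeping: ensuring $\theta=\rho$ meets $\rho\ge\theta>0$ and separating off the empty family.
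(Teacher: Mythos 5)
Your proposal is correct and is precisely the paper's own argument: the corollary is stated there as the immediate combination of Theorem \ref{main} (supplying the enumeration and the disjoint sets $d_\a$ satisfying $(i)$ and $(ii)$) with Claim \ref{concofdisj} applied with $\theta=\rho$, the strict bound $<\rho$ trivially yielding the required $\le\rho$. Your silent reading of the corollary's hypothesis $C(\nu,\rho^+)$ as the condition $C(\rho^+,\nu)$ required by Theorem \ref{main} (the arguments are transposed in the corollary as printed) is also the intended one.
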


Hajnal, Juh\'asz, Soukup and Szentmil\'ossy \cite{hjss} proved
that a $\k$-uniform family which is $r$-a.d for finite $r$ has
countable conflict-free number. See also Komj\'ath \cite{komcf}, where
the uniformity condition is removed.

\subsection{Comparing almost disjoint families} We conclude this
section with a generalization of a theorem by Komj\'ath on comparing
families of sets.

\begin{definition}Two families of sets $\mathcal F$ and $\mathcal G$
  are \emph{similar} if there is a bijection $f:\mathcal F\to \mathcal
  G$ such that $|A\cap B|=|f(A)\cap f(B)|$ for all $A,B\in \mathcal
  F$. 
\end{definition}
Komj\'ath \cite{komcomp} addressed the following question: which
combinatorial properties of $\aleph_0$-uniform families are invariant under similarity?  Property B
and the existence of a disjoint refinement are not similarity
invariant, as one can replace $A\in \mathcal
F$ by $A\cup D(A)$ where $D(A)\cap D(B)=\emptyset$ for distinct
$A,B\in \mathcal F$ and obtain an equivalent family which has a
disjoint refinement from a family which does not satisfy property B.
However, Komj\'ath proved:

\begin{theorem}[Komj\'ath \cite{komcomp}]
Suppose $\mathcal F$ and $\mathcal G$ are $\aleph_0$-uniform, almost disjoint and
equivalent. If $\mathcal F$ is e.d. then also $\mathcal G$ is
e.d. More generally: $G$ is e.d if $\mathcal F$ is and there exists a bijection  $f:\mathcal F\to \mathcal G$ 
such that $|A\cap B|\ge |f(A)\cap f(B)|$ for all $A,B\in \mathcal F$.
\end{theorem}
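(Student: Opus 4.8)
The plan is to prove the statement by transfinite induction on the common cardinality $\lambda=|\mathcal F|=|\mathcal G|$, reducing essential disjointness to the existence of finite \emph{debts}. Recall that $\mathcal G$ is e.d.\ exactly when there are finite sets $B(G)\su G$ with $G\cap G'\su B(G)\cup B(G')$ for all distinct $G,G'\in\mathcal G$; in particular $|G\cap G'|\le|B(G)|+|B(G')|$. Fixing an e.d.\ witness $\{B_{\mathcal F}(A):A\in\mathcal F\}$ for $\mathcal F$ and setting $b(A)=|B_{\mathcal F}(A)|$, the hypothesis $|f(A)\cap f(B)|\le|A\cap B|\le b(A)+b(B)$ transports the \emph{sizes} of the debts to $\mathcal G$. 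The base case $\lambda\le\aleph_0$ is immediate: enumerating any countable $\aleph_0$-uniform a.d.\ family as $\{G_n:n<\om\}$ and putting $B(G_n)=G_n\cap\bigcup_{k<n}G_k$ (a finite union of finite sets) already witnesses that every countable such family is e.d., so $\mathcal G$ is e.d.

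For $\lambda>\aleph_0$ I would fix a continuous increasing chain $\lng M_i:i<\lambda\rng$ of elementary submodels of a large $(H(\chi),\in)$ with $f,\mathcal F,\mathcal G\in M_0$, $|M_i|<\lambda$ and $|M_i|\su M_i$, as in Lemma \ref{continuity}. Since $f\in M_i$, the bijection carries $\mathcal F\cap M_i$ onto $\mathcal G\cap M_i$, so the two families are filtered \emph{synchronously}: writing $\mathcal F_i=\mathcal F\cap M_i$ and $\mathcal G_i=f[\mathcal F_i]$, each pair $(\mathcal F_i,\mathcal G_i)$ again satisfies the hypotheses of the theorem and has size $<\lambda$, hence is handled by the induction hypothesis. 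The task then reduces to \emph{gluing} the refinements of the pieces, i.e.\ to bounding, for each $G=f(A)$ entering at a stage $i$ (so $A\in M_{i+1}\sm M_i$), the back-intersection $|G\cap V(\mathcal G_i)|$: if this is finite for every newly entering $G$, the debts of the pieces together with these finite back-sets assemble into a global e.d.\ witness for $\mathcal G$.

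The engine for the finiteness of back-intersections is almost-disjointness itself, not the $\beth_\om$-machinery of Section \ref{filtsec} (indeed $\aleph_0<\beth_\om(\aleph_0)$, so Theorem \ref{filtrable} is silent here and a sharper, $\aleph_0$-level closure is needed). Because $\mathcal G$ is a.d., any infinite set is contained in at most one member of $\mathcal G$; thus if $G\cap M_i$ were infinite, then $G$ would be the unique member of $\mathcal G$ extending the countable set $G\cap M_i$, and when $M_i$ is closed under countable subsets this forces $G\in M_i$, i.e.\ $A\in M_i$, contradicting that $A$ enters at stage $i$. Consequently, at every stage $i$ for which $M_i$ absorbs its countable subsets the back-intersection $G\cap V(\mathcal G_i)$ is finite, and the transported debt of $A$ controls the remaining same-stage conflicts through $|f(A)\cap f(A')|\le|A\cap A'|$.

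The hard part is precisely the stages where $M_i$ fails to contain its countable subsets --- the limits $i$ of cofinality $\aleph_0$, and more generally the whole regime $\lambda\le\cont$, where models of size $<\lambda$ cannot be countably closed. There the back-intersection $G\cap V(\mathcal G_i)$ can genuinely be infinite (this is exactly the phenomenon behind $\aleph_0$-uniform a.d.\ families that are \emph{not} e.d., such as the branch family on ${}^{<\om}\om$, where a countable universe is shared by continuum-many sets), so almost-disjointness alone does not suffice. To close these stages I would use the actual e.d.\ witness of $\mathcal F$: the sets $B_{\mathcal F}(A)$ single out, inside each $A$, a finite portion meeting all earlier members of $\mathcal F$, and I would show that pushing this choice through $f$ pre-pays a finite debt $B(G)$ covering the $f$-image of the conflicts that $B_{\mathcal F}(A)$ resolves, the inequality $|f(A)\cap f(A')|\le|A\cap A'|$ guaranteeing that these transported debts still cover $G\cap G'$. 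Making this transport consistent across the countable-cofinality limits --- so that no $G$ accrues an infinite debt --- is the main obstacle, and is exactly where the e.d.\ hypothesis on $\mathcal F$, rather than mere almost-disjointness of $\mathcal G$, is essential.
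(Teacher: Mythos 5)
Your proposal sets up the right framework---induction on $\l=|\mathcal F|$, an elementary-chain filtration, synchronizing the two families through $f$, and the observation that everything reduces to bounding the back-intersections $|B_\b\cap V(\mathcal G_i)|$---but it stops exactly at the point where the theorem's actual content lies. You yourself identify the stages where the models cannot be countably closed (limits of cofinality $\aleph_0$, and the whole regime $\l\le\cont$) as ``the main obstacle,'' and there you only assert that you ``would show'' the e.d.\ witness of $\mathcal F$ can be pushed through $f$. That transport cannot be carried out literally: $f$ is a bijection between the families, not a map on points, so the finite sets $B_{\mathcal F}(A)\su A$ have no canonical image inside $f(A)$; the hypothesis $|f(A)\cap f(A')|\le |A\cap A'|$ transfers only \emph{cardinalities} of pairwise intersections, and finite pairwise intersections alone do not imply essential disjointness---your own branch-family example shows this. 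So the proposal has a genuine gap at its decisive step.

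The paper closes this gap (in the proof of Theorem \ref{komcomgen}, of which the statement is the case $\mu=\rho=\aleph_0$, the extra hypothesis on size-$\rho$ subfamilies being automatic there) by a mechanism absent from your outline. Writing $C_\b$ for the e.d.\ witness of $\mathcal F$ and $\d(\a)=M_\a\cap\l$, elementarity plus disjointness of $\{A_\b\sm C_\b\}$ shows that any $A_\b$ meeting $\d(\a)$ \emph{outside} $C_\b$ already lies in $M_\a$; hence $\b\ge\d(\a)$ forces $|A_\b\cap\d(\a)|\le|C_\b|$, giving pairwise bounds $|A_\b\cap A_\g|\le\k(\b)$ for $\g<\d(\a)\le\b$, and these cardinal bounds \emph{do} transfer to $\mathcal G$. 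The crux is then a Claim converting the transferred pairwise bounds back into bounds on intersections with whole initial segments: if $|B_{\b(\d)}\cap\d|\ge\mu$ for stationarily many $\d$, one chooses (by density of $M_\d\cap[\d]^{\k}$ in $[\d]^{\k}$, Lemma \ref{continuity}---trivial for finite $\k$ when $\mu=\aleph_0$) a set $X\in M_\d$ with $X\su B_{\b(\d)}$ and $|X|>\k(*)$, and Fodor's Lemma fixes $X$ on a stationary set, contradicting the transferred pairwise bound. This club-plus-Fodor argument, which needs no countable closure of the models, is precisely what your proposal is missing; your singular case of the induction (handled in the paper by Proposition 5 of \cite{komclose}) is also left unaddressed.
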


The cardinal $\aleph_0$ is strong limit and regular, that is, strongly
inaccessible.  As a corollary of the next theorem, Komj\'ath's theorem
extends from $\aleph_0$ to all strongly inaccessible cardinals, but
actually more is proved.  

\begin{theorem}\label{komcomgen}
  Suppose $\mu$ is a strong limit cardinal and $\rho\ge \mu$. Suppose
  $\mathcal F=\{A_a:\a<\l\}, \mathcal G=\{B_\a:\a<\l\}$ are
  $\rho$-uniform and for every $\a<\b<\l$ it holds that
\begin{equation}\label{assumption}
|A_\a\cap A_\b|\ge |B_\a\cap B_\b|.
\end{equation}

Then if $\mathcal F$ is $<\mu$-essentially disjoint, so is $\mathcal
G$, provided that every subfamily of $\mathcal G$ of cardinality
$\rho$ is $\mu$-e.d.
\end{theorem}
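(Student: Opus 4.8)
The plan is to mimic the filtration-plus-transfinite-induction strategy used in Theorem \ref{main}, but now transferring an essential-disjointness witness from $\mathcal{F}$ to $\mathcal{G}$ while respecting the pairwise-intersection domination hypothesis (\ref{assumption}). First I would reduce to the case $\l>\rho$ by induction on $\l=|\mathcal{F}|$; the base case $\l\le\rho$ is handled directly by the hypothesis that every subfamily of $\mathcal{G}$ of size $\rho$ is $\mu$-e.d. For the inductive step, the key observation is that since $\mu$ is a strong limit cardinal and $\mathcal{F}$ is $<\mu$-essentially disjoint, the family $\mathcal{F}$ satisfies a suitable $C(\rho^+,\mu)$-type intersection bound; combined with (\ref{assumption}) this transfers to $\mathcal{G}$, so that both $\mathcal{F}$ and $\mathcal{G}$ have universes that are $\mu$-filtrable by Lemma \ref{famfilt}. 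The goal is to build a simultaneous filtration compatible with both families.

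The central step is to fix a strictly increasing continuous chain of $\mu$-closed subfamilies $\{\mathcal{H}_i:i<\cf\l\}$ of $\mathcal{F}$ with $|\mathcal{H}_i|<\l$ and $\mathcal{F}=\bigcup_i\mathcal{H}_i$, and simultaneously require the corresponding index sets to induce $\mu$-closed subfamilies of $\mathcal{G}$ as well; this is where elementarity (via the model-chain version of filtrations) is convenient, since one can build the chain inside elementary submodels that contain both $\mathcal{F}$ and $\mathcal{G}$, so closure holds on both sides at limit stages. Setting $\mathcal{F}_i=\mathcal{H}_i\sm\bigcup_{j<i}\mathcal{H}_j$ and letting $\mathcal{G}_i$ be the matching piece of $\mathcal{G}$, the $\mu$-closure gives, for each $A\in\mathcal{F}_i$, that $|A\cap\bigcup_{j<i}V(\mathcal{F}_j)|<\mu$, and the analogous bound for the $\mathcal{G}$-side follows from (\ref{assumption}) together with the intersection pattern. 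By the induction hypothesis applied to each piece $\mathcal{F}_i$ (of cardinality $<\l$, still satisfying the domination against $\mathcal{G}_i$), we obtain for each $i$ a witness $B_i(B)\in[B]^{<\mu}$ making $\{B\sm B_i(B):B\in\mathcal{G}_i\}$ pairwise disjoint within the piece.

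Finally I would amalgamate the local witnesses into a global one: for $B\in\mathcal{G}_i$ set $B(B)=B_i(B)\cup\bigl(B\cap\bigcup_{j<i}V(\mathcal{G}_j)\bigr)$, exactly as in the proof of Corollary \ref{elimination}(2). The first term has size $<\mu$ by the induction hypothesis, and the second term has size $<\mu$ by the $\mu$-closure bound transferred to $\mathcal{G}$, so $|B(B)|<\mu$ and $\{B\sm B(B):B\in\mathcal{G}\}$ is pairwise disjoint across all pieces since distinct pieces have been separated by removing the earlier universe. I expect the main obstacle to be the transfer of the closure bound ``$|B\cap\bigcup_{j<i}V(\mathcal{G}_j)|<\mu$'' to the $\mathcal{G}$-side: the inequality (\ref{assumption}) controls \emph{pairwise} intersections $|B_\a\cap B_\b|$ but the filtration bound is about intersection with a \emph{union}, so I would need to argue that $\mu$-closure of the $\mathcal{F}$-piece, which forces each $A\in\mathcal{F}_i$ to meet the earlier universe in $<\mu$ members of $\mathcal{F}$ each in $<\mu$ points, pushes through (\ref{assumption}) to bound the $\mathcal{G}$-side union intersection—this is exactly the place where the strong-limit property of $\mu$ and the simultaneous construction of the chain inside common elementary submodels must be used carefully.
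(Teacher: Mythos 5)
Your proposal stalls exactly at the step you flag in your last sentence, and that step is not a technical detail to be checked ``carefully'' --- it is the entire content of the theorem. Two concrete problems. First, the appeal to Lemma \ref{famfilt} is unjustified: $<\mu$-essential disjointness of $\mathcal F$ only gives $|A_\a\cap A_\b|\le |C_\a\cup C_\b|<\mu$, i.e.\ a $C(2,\mu)$ bound; it does \emph{not} give $C(\rho^+,\nu)$ for any single $\nu<\mu$, since the witnesses $C_\a$ may have sizes cofinal in $\mu$. Applying Lemma \ref{famfilt} with $\nu=\mu$ produces a filtration into $\beth_\om(\mu)$-closed pieces, so the bound you get is $|A\cap\bigcup_{j<i}V(\mathcal F_j)|<\beth_\om(\mu)$, which is useless for extracting removal sets of size $<\mu$. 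Second, and more seriously, pairwise smallness of intersections in $\mathcal G$ can never by itself yield the union bound $|B\cap\bigcup_{j<i}V(\mathcal G_j)|<\mu$: a single $B_\b$ could meet $\mu$ many earlier sets $B_\g$ in one point each. Inequality (\ref{assumption}) transfers only pairwise data, so ``closure holds on both sides at limit stages'' is precisely what you cannot conclude merely by putting both families into common elementary submodels.

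The paper fills this gap with an argument absent from your outline. It fixes the e.d.\ witnesses $C_\a$ for $\mathcal F$ and an elementary chain with $\d(\a)=M_\a\cap\l$, treating only regular $\l$ this way (singular $\l$ is dispatched by Proposition 5 of \cite{komclose}, a case split your induction would also need). Since the sets $A_\b\sm C_\b$ are pairwise disjoint, any $\g<\d(\a)$ lying in $A_\b\sm C_\b$ determines $A_\b$, so by elementarity $A_\b\su\d(\a)$; hence $\b\ge\d(\a)>\g$ forces $|A_\b\cap A_\g|\le|C_\b|=:\k(\b)$, and it is this \emph{pairwise} bound, with its specific uniform bound $\k(\b)$, that is copied to $\mathcal G$ via (\ref{assumption}). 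The union bound for $\mathcal G$ is then proved by contradiction: if $|B_{\b(\d)}\cap\d|\ge\mu$ with $\b(\d)\ge\d$ for stationarily many $\d$, one thins out so that $\k(\b(\d))=\k(*)$ is constant and $\b(\d_1)<\d_2$, uses Lemma \ref{continuity} to find inside $M_\d$ a set $X\su B_{\b(\d)}\cap\d$ with $\k(*)<|X|<\mu$, and applies Fodor's Lemma to get $\d_1<\d_2$ with the same $X$, whence $|B_{\b(\d_1)}\cap B_{\b(\d_2)}|\ge|X|>\k(*)$, contradicting the transferred pairwise bound. So the missing step needs the e.d.\ witness of $\mathcal F$ itself (not just $\mu$-almost-disjointness), regularity of $\l$, a stationarity/Fodor argument, and the density Lemma \ref{continuity}; only after this claim is established does your amalgamation scheme (which correctly mirrors Corollary \ref{elimination}(2)) become available.
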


\begin{proof}
  Let $C_\a\in [A_\a]^{<\mu}$ be fixed for all $\a<\l$ such that
  $(A_\a\sm C_\a)\cap (A_\b\sm C_\b)=\emptyset$ for all $\a<\b<\l$. We
  prove by induction on $\l\ge\rho$ that $\mathcal G$ is $<\mu$-e.d.
   If $\l=\rho$ then $\mathcal G$  is $\mu$-e.d by the assumption that every subfamily of
  $\mathcal G$ of cardinality $\rho$ is $\mu$-e.d.

  For singular $\l>\rho$ the conclusion follows from the induction
  hypothesis by Proposition 5 in \cite{komclose}.

Assume then that $\l>\rho$ is regular. We may assume that $V(\mathcal
F)=V(\mathcal G)\su \l$. As $\mathcal F$ is $<\mu$-a.d. it follows
that $|V(\mathcal F)|=\l$, so we assume that actually $V(\mathcal
F)=\l$.

Let $\ov M=\lng M_\a:\a<\l\rng$ be an elementary chain of models
$M_\a\prec (H(\Omega),\in,\prec)$ for a sufficiently large regular
$\Omega$ such that $\mathcal F, \mathcal G\in M_0$ and also the
function $A_\a\mapsto C_\a$ belongs to $M_0$, such that: 

\begin{enumerate}
\item $\rho\su M_0$ and $\a<\b\imply M_\a\su M_\b$.
\item $M_\a\cap \l\in \l$ and $|M_\a|\su M_\a$. 
\item $\lng M_\b:\b\le \a\rng\in M_{\a+1}$.
\end{enumerate}

Let $\d(\a)=M_\a\cap \l$.

For each ordinal $\g<\d(\a)$ there is at most one $\b<\l$ such that
$\g\in A_\b\sm C_\b$. Thus, whenever $\g\in A_\b\sm C_\b$ and
$\g<\d(\a)$ for some $\a<\l$, by elementarity $A_\b\in M_\a$ and hence
$A_\b\su \d(\a)$. If $\b\ge \d(\a)$, then, it follows that $A_\b\cap
\d(\a)\su C_\b$. 

For each $\b<\l$ let $\k(\b)=|C_\b|$. We get:
\begin{equation}
\b\ge \d(\a)\imply |A_\b\cap \d(\a)|\le \k(\b).
\end{equation}
and, since $\g<\d(\a)\imply A_\g\su \d(\a)$,
\begin{equation}\label{eq2}
\b\ge \d(\a)>\g\imply |A_\b\cap A_\g|\le \k(\b).
\end{equation}

By  assumption (\ref{assumption}), condition (\ref{eq2}) can be copied over to
$\mathcal G$, that is:

\begin{equation}\label{eq4}
\b\ge \d(\a)>\g\imply |B_\b\cap B_\g|\le \k(\b).
\end{equation}

\begin{claim} 
  There exists a closed unbounded $E\su \{\d(\a):\a<\l\}$ such that
  for every $\d\in E$ and $\b\ge \d$ it holds that
\begin{equation}\label{eq5}
 |B_\b\cap \d|<\mu.
\end{equation}
\end{claim}

\begin{proof}[Proof of Claim]
  Suppose to the contrary that $S\su \{\d(\a):\a<\l\}$ is stationary and that 
  $\b(\d)\ge \d$ is chosen  for $\d\in S$ such that  $ |A_{\b(\d)}\cap
  \d|\ge \mu$. We may assume that $\d_1<\d_2\imply \b(\d_1)<\d_2$ for
  $\d_1,\d_2\in S$ by intersecting $S$ with a club, and by thinning $S$ out assume that
  $\k(\beta(\d))=|C_{\b(\d)}|=\k(*)$ is fixed for all $\d\in S$.

The set $S'=S\cap \acc S$ is stationary. If $\d\in S'$ then for
all sufficiently large regular $\k<\mu$ it holds that $M_\d\cap [\d]^{\k}$ is
dense in $[\d]^{\k}$ by Lemma \ref{continuity}, hence, as
$|B_{\b(\d)}\cap \d|\ge \mu$, for each sufficiently large regular $\k<\mu$ there is
$X\in M_\d$ such that $|X|=\k$ and $X\su B_{\b(\d)}$. 

Define $F(\d)=X$, for each $\d\in S'$, such that $X\su B_{\b(\d)}$,
$X\in M_\d$ and $|X|>\k(*)$. By Fodor's Lemma, we can assume that $F$
is fixed on a stationary $S''\su S'$. Fix, then, $\d_1<\d_2$ in $S''$ with
$F(\d_1)=F(\d_2)=X\in M_{\d_1}$. But now $|B_{\b(\d_1)}\cap
B_{\b(\d_2)}|\ge |X|>\k(*)$, and as $\b(\d_1)<\d_2$ 
 this contradicts equation (\ref{eq4}) above.
\end{proof}

The conclusion of the theorem now follows from the induction
hypothesis. 
\end{proof}

In the case that $\rho=\mu=\cf\mu$ the assumption that
every subfamily of $\mathcal G$ of cardinality $\rho$ is $\mu$-e.d is,
of course, superfluous, as $\mathcal G$ is a.d by the
similarity with $\mathcal F$ and every $\rho$-uniform a.d family of
regular size $\rho$ is e.d.

\bigbreak

\noindent\textbf{Concluding remarks.} After Cohen's and Easton's
results showed that the GCH and many of its instances were not
provable in ZFC the impression may have been that the GCH or some
other additional axiom was required for proving general combinatorial
theorems for all infinite cardinals. It now seems that, at least the
combinatorics of splitting families of sets, which for a long time was
hindered by the ``totally independent'' aspect of cardinal arithmetic,
is, in the end, amenable to investigation on the basis of ZFC. We
expect that the method presented here will be useful proving additional
absolute combinatorial relations which generalize known combinatorial
relations between finite cardinals to infinite ones.

\bigbreak
\noindent\textbf{Acknowledgments.} I thank A. Hajnal
for suggesting to me, in the conference celebrating his 80-th
birthday, to apply the methods from \cite{NogaShelah} to splitting
families of sets, and S. Shelah for his proof of Lemma
\ref{continuity} and for his explanations and opinions about the SWH,
which I used in Section \ref{swh}.

Part of the research presented here was done when I was a member of
the Institute for Advanced Study in Princeton during 2010-11. I am
grateful to the IAS for its support and to the School of Mathematics
for its  wonderful atmosphere.

\end{document}